\documentclass[12pt]{amsart}
\makeatletter

\theoremstyle{plain}

\newtheorem{thm}{Theorem}[section]

\newtheorem{cor}[thm]{Corollary}

\newtheorem{lem}[thm]{Lemma}
\newtheorem{prop}[thm]{Proposition}
\theoremstyle{definition}
\newtheorem{defn}[thm]{Definition}
\theoremstyle{remark}
\newtheorem{rem}[thm]{Remark}
\theoremstyle{remark}

\theoremstyle{remark}

\theoremstyle{remark}

\theoremstyle{definition}
\newtheorem{example}[thm]{Example}

\theoremstyle{definition}

\theoremstyle{plain}

\theoremstyle{definition}

\theoremstyle{remark}

\theoremstyle{remark}

\theoremstyle{definition}

\theoremstyle{remark}
\newtheorem*{acknowledgement*}{Acknowledgement}


\newcommand{\C}{{\mathbb C}}
\newcommand{\N}{{\mathbb N}}
\newcommand{\Z}{{\mathbb Z}}

\newcommand{\K}{{\mathbb K}}

\newcommand{\e}{\varepsilon}

\newcommand{\hh}{{\mathcal H}}

\newcommand{\hm}{{\mathcal M}}

\newcommand{\hr}{{\mathcal R}}
\newcommand{\id}{\mathrm{id}}


\DeclareMathOperator{\Ad}{Ad}

\DeclareMathOperator{\Aut}{Aut}

\newcommand{\T}{\mathbb{T}}

\newcommand{\rdim}{\mathrm{dim}_{\mathrm{Rok}}}

\def\freeprod{\font\bigsymbolsfont=cmsy10 scaled \magstep3
 \setbox0=\hbox{\bigsymbolsfont\char'003 }\mathord{\lower1pt\box0}}\relax\ignorespaces

\newcommand{\Hawaii}{Hawai\kern.05em`\kern.05em\relax i}

%
%

\setlength{\oddsidemargin}{0.0in}
\setlength{\evensidemargin}{0.0in}
\setlength{\topmargin}{0.25in}
\setlength{\textheight}{8.0in}
\setlength{\textwidth}{6.5in}
\setlength{\marginparwidth}{1.75in}
\setlength{\marginparsep}{0.10in}

\usepackage{amssymb} 
\usepackage{amscd} 
\usepackage{hyperref}  
\usepackage{tikz} 
\usetikzlibrary{matrix,arrows}

\makeatother

\begin{document}

\title{Rokhlin dimension of $\Z^m$-actions on simple $C^*$-algebras}

\author{Hung-Chang Liao}

\address{Fachbereich Mathematik, Westf\"alische Wilhelms-Universität M\"unster, 48149 M\"unster, Germany}

\thanks{2010 Mathematics Subject Classification. 46L55}

\email{liao@uni-muenster.de}

\begin{abstract} We study Rokhlin dimension of $\Z^m$-actions on simple separable stably finite nuclear $C^*$-algebras. We prove that under suitable assumptions, a strongly outer $\Z^m$-action has finite Rokhlin dimension. This extends the known result for automorphisms. As an application, we show that for a large class of $C^*$-algebras, the $\Z^m$-Bernoulli action has finite Rokhlin dimension. 
\end{abstract}

\maketitle

\section{Introduction}
Extending topological covering dimension to the context of nuclear $C^*$-algebras, Winter and Zacharias introduced \emph{nuclear dimension} in \cite{WZ09}. Recent advances in the study of nuclear $C^*$-algebras have revealed that: 
\begin{itemize}
	\item[(1)] Finiteness of nuclear dimension implies other structural properties, including Jiang-Su stability and strict comparison (\cite{Win12, Ror04}). In fact, a conjecture of Toms and Winter predicts that these are all equivalent for unital separable simple nuclear $C^*$-algebras. 
	\item[(2)] Unital separable simple $C^*$-algebras of finite nuclear dimension have been completely classified by $K$-theoretic invariants, provided they satisfy a technical condition known as the UCT (Universal Coefficient Theorem) (\cite{Kir, Phi00, TWW17, EGLN15}).
\end{itemize} 
Given the success above, it is important to understand how nuclear dimension behaves under various constructions, such as crossed products. Motivated by the Kakutani-Rokhlin lemma in ergodic theory (\cite{Kak43, Roh48}), its non-commutative analogues for finite von Neumann algebras (\cite{Con75, Ocn85}) and later developments in the $C^*$-realm, Hirshberg, Winter, and Zacharias introduced \emph{Rokhlin dimension} in \cite{HWZ15} for actions of finite groups and the set of integers $\Z$ on unital $C^*$-algebras. One of the main results in \cite{HWZ15} is that finiteness of nuclear dimension passes to crossed products when the action has finite Rokhlin dimension. In addition, they showed that:
\begin{itemize}
	\item[(1)] Actions with finite Rokhlin dimension are ubiquitous; more precisely, given a unital Jiang-Su stable $C^*$-algebra $A$, the automorphisms with finite Rokhlin dimension form a $G_\delta$-subset of the automorphism group $\Aut(A)$ (\cite[Theorem 3.4]{HWZ15}).
	\item[(2)] There are plenty of natural examples coming from actions on commutative $C^*$-algebras, i.e., topological dynamics. In fact, given any minimal homeomorphism of a compact metrizable space with finite covering dimension, the induced $C^*$-system has finite Rokhlin dimension (\cite[Theorem 6.1]{HWZ15}).
\end{itemize}
Since the initial paper of Hirshberg, Winter, and Zacharias, the notion of Rokhlin dimension has been generalized to many different contexts. Rokhlin dimension for actions of $\Z^m$ was introduced by Szab\'o in \cite{Sza15}, and along the same line  Szab\'o, Wu, and Zacharias in \cite{SWZ15} defined Rokhlin dimension for actions of residually finite groups. Similar results regarding nuclear dimension, genericity, and topological dynamics were also obtained. The other developments include Rokhlin dimension for compact groups actions \cite{Gar14}), for $C^*$-correspondences \cite{BTZ16}, for flows \cite{HSWW16}, and for compact quantum group actions \cite{BSV17, GKL17}.

The present paper focuses on $C^*$-dynamical systems of a purely non-commutative nature. We consider $\Z^m$-actions on simple separable stably finite nuclear (hence tracial) $C^*$-algebras. Our first main result is a Rokhlin type theorem: under suitable assumptions, a certain strong form of outerness, which is a necessary condition for the action to have finite Rokhlin dimension (see Proposition \ref{prop:main-converse}), turns out to be sufficient. The precise statement is given below (see Section 2 and Section 5 for relevant definitions) Recall that \emph{Property (SI)}, as introduced by Sato in \cite{Sat10}, is a comparison-by-traces type property for central sequence algebras (see Definition \ref{defn:properptySI} for the precise definition). This property is possessed by a large class of simple nuclear $C^*$-algebras, in particular the ones of finite nuclear dimension (cf. \cite{MS12a, Win12, Ror04}).

\begin{thm} \label{thm:1-1} (Theorem \ref{thm:main})
Let $A$ be a unital simple separable nuclear $C^*$-algebra with nonempty trace space $T(A)$, and let $\alpha:\Z^m\to \Aut(A)$ be a group action. Suppose
\begin{enumerate}
	\item[(1)] $A$ has property (SI),
	\item[(2)] $\alpha$ is strongly outer,
	\item[(3)] $T(A)$ is a Bauer simplex with finite dimensional extreme boundary,
	\item[(4)] $\tau\circ \alpha = \tau$ for every $\tau\in T(A)$.
\end{enumerate}
Then $\rdim(\alpha) \leq 4^m-1$.
\end{thm}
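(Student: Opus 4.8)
The plan is to produce, inside the central sequence algebra of $A$, the colored commuting Rokhlin towers required by the definition of $\rdim(\alpha)$, first solving the problem at the level of the tracial von Neumann completions and then transferring the solution to the $C^*$-level by means of property (SI). I would work in the sequence algebra $A_\infty = \ell^\infty(\N,A)/c_0(\N,A)$ and its central sequence algebra $A_\infty\cap A'$, on which $\alpha$ induces an action $\alpha_\infty$. By condition (4) each $\tau\in T(A)$ is $\alpha$-invariant, so the induced automorphisms descend to the tracial ultrapowers $M_\tau^\omega\cap M_\tau'$, where $M_\tau=\pi_\tau(A)''$. The target is, for every $p$ and every $\e>0$, a family of positive contractions indexed by $4^m$ colors and by the cube $(\Z/p\Z)^m$, cyclically permuted by $\alpha_\infty$ up to $\e$ and summing to the unit.

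The first step is the von Neumann algebra Rokhlin property. Since $\alpha$ is strongly outer, for each extremal $\tau$ the extended $\Z^m$-action on the $\mathrm{II}_1$ factor $M_\tau$ is outer, hence by the Connes--Ocneanu Rokhlin theorem for outer actions of amenable groups it has the approximate Rokhlin property, giving exact Rokhlin projections in $M_\tau^\omega\cap M_\tau'$. The difficulty is that these towers must be produced coherently across all of $T(A)$ at once. Here I would exploit hypothesis (3): because $T(A)$ is a Bauer simplex, a general trace is an integral of extremal traces over the compact boundary $K=\partial_e T(A)$, and the relevant uniform tracial von Neumann algebra can be analyzed as a field over $K$. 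Finite dimensionality of $K$ is precisely what permits a continuous (measurable) selection of the trace-wise Rokhlin towers into a single central sequence, via a partition-of-unity argument over a finite cover of $K$; this keeps the construction uniform over traces without inflating the final dimension count.

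The second step is the transfer to $A$ via property (SI). The von Neumann towers yield positive contractions in $A_\infty\cap A'$ satisfying the Rokhlin relations in the uniform-trace $2$-norm, but only approximately in operator norm, with a small ``defect'' positive element measuring the failure of orthogonality and of the partition-of-unity relation. Property (SI), being a comparison-by-traces statement, is exactly what upgrades tracial smallness of this defect to an honest Cuntz subequivalence in $A_\infty\cap A'$; absorbing the defect then converts the tracial estimates into genuine norm-approximate Rokhlin tower elements. This is the technical heart of the argument, and the place where (SI) and the trace hypotheses are used in tandem.

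Finally, the dimension count. The scheme applied to a single generator produces four towers --- two of relatively prime heights, as in the classical Rokhlin lemma, to exhaust $\Z$, together with a further doubling needed to pass from projections to the commuting \emph{positive} towers demanded by the definition --- so that $\rdim\le 3$ when $m=1$; carrying this out in all $m$ coordinate directions multiplies the color count by $4$ per direction, yielding $4^m$ colors and hence $\rdim(\alpha)\le 4^m-1$. The main obstacle is the interaction of the first two steps: turning the trace-by-trace von Neumann Rokhlin property into a single norm-approximate tower in $A_\infty\cap A'$ is impossible in general, and becomes tractable only because the Bauer simplex with finite-dimensional boundary supplies uniform, selectable towers while property (SI) supplies the comparison that converts tracial estimates into norm estimates. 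Since Proposition \ref{prop:main-converse} shows strong outerness is necessary for finite Rokhlin dimension, this establishes its sufficiency under hypotheses (1), (3), and (4).
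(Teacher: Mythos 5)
Your proposal follows essentially the same route as the paper: a $W^*$-level Rokhlin theorem obtained by gluing Ocneanu's fiberwise towers over the finite-dimensional compact boundary $\partial_e T(A)$, a transfer to the central sequence algebra in which property (SI) absorbs the trace-kernel defect, and a count of four colors per coordinate direction (towers of relatively prime heights $p$ and $p+1$, doubled when equalized to a common height) multiplied over the $m$ generators to give $4^m$. The only ingredient you leave implicit is the content of Proposition \ref{prop:Zm-Rokhlin}: the single-generator towers must be arranged to be almost \emph{fixed} by the other $m-1$ generators before they can be multiplied together, which the paper obtains by summing the box-indexed approximate Rokhlin elements over slabs $\{v : v_i = k\}$.
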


Note that on the purely infinite side, G\'abor Szab\'o has shown that for actions of countable discrete residually finite amenable groups on Kirchberg algebras, pointwise outerness implies finite Rokhlin dimension \cite{Sza16}. On the stably finite side, Eusebio Gardella and Ilan Hirshberg have been able to prove a result similar to Theorem \ref{thm:1-1} for finite group actions, without any restrction on the induced action on the tracial simplex.

This type of results dates back to Connes' non-commutative Rokhlin lemma \cite{Con76}, where he showed that outer $\Z$-actions on finite von Neumann algebras have the ($W^*$)-Rokhlin property. Ocneanu in \cite{Ocn85} obtained a similar result for actions of discrete amenable groups. On the $C^*$-side, Hermann and Ocneanu introduced a Rokhlin property for automorphisms of $C^*$-algebras \cite{HO84}. The definition was later refined by Kishimoto \cite{Kis95}, who proved that for certain AF and A$\T$ algebras, strong outerness implies the Rokhlin property \cite{Kis96, Kis98}. Nakamura studied $\Z^2$-actions on UHF algebras and obtained a similar Rokhlin type result \cite{Nak99}. Various Rokhlin type definitions and theorems also appeared in the work of Osaka and Phillips \cite{OP06a, OP06b}, Sato \cite{Sat10}, and Matui and Sato \cite{MS12b, MS14}. Recently, we proved a Rokhlin type theorem similar to Theorem \ref{thm:1-1} for automorphisms \cite{Lia16}, which was built upon a remarkable breakthrough made by Matui and Sato (unpublished; see \cite[Section 6]{Lia16}).

In Section 2 we will prove the converse of Theorem \ref{thm:1-1} (see Proposition \ref{prop:main-converse}). Combining this and Remark \ref{rem:trace}, we obtain the following corollary:
\begin{cor}
Let $A$ be a unital simple separable nuclear $C^*$-algebra with nonempty trace space $T(A)$, and let $\alpha: \Z^m\to \Aut(A)$ be a group action. Suppose
\begin{enumerate}
	\item[(1)] $A$ has property (SI),
	\item[(2)] $T(A)$ is a Bauer simplex with finite dimensional extreme boundary, and
	\item[(3)] $\tau\circ \alpha = \tau$ for every $\tau\in T(A)$.
\end{enumerate}
Then the following are equivalent.
\begin{enumerate}
	\item[(i)] $\dim_{Rok}(\alpha) \leq 4^m-1$.
	\item[(ii)] For every $v\in \Z^m\setminus \{0\}$, the restriction map $r:T( A \rtimes_{\alpha^v}\Z  )\to T(A)^{\alpha^v}$ is injective.
	\item[(iii)] The action $\alpha$ is strongly outer.
\end{enumerate}
\end{cor}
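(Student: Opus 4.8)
The plan is to obtain the corollary purely by assembling three results already available: the main theorem (Theorem~\ref{thm:main}), its converse (Proposition~\ref{prop:main-converse}), and the trace-theoretic reformulation of strong outerness recorded in Remark~\ref{rem:trace}. No new machinery is needed; the task is to check that the hypotheses match up and that the single-automorphism trace criterion applies uniformly to each $\alpha^v$.

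For the equivalence (i)$\Leftrightarrow$(iii) I would argue as follows. The standing hypotheses (1)--(3) of the corollary are exactly hypotheses (1), (3), (4) of Theorem~\ref{thm:main}; hence adjoining strong outerness (iii) supplies the one missing hypothesis (2), and Theorem~\ref{thm:main} gives $\rdim(\alpha)\le 4^m-1$, i.e.\ (iii)$\Rightarrow$(i). Conversely, Proposition~\ref{prop:main-converse} says that finite Rokhlin dimension forces strong outerness, giving (i)$\Rightarrow$(iii). I would point out that this second implication is the soft direction and uses neither (SI) nor the Bauer-simplex hypothesis; it holds in the stated generality.

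The substance lies in (ii)$\Leftrightarrow$(iii), which is where the crossed-product trace space meets the dynamics, and this is precisely what Remark~\ref{rem:trace} is designed to record. I would first reduce to single automorphisms: by definition the $\Z^m$-action $\alpha$ is strongly outer exactly when, for each nonzero $v$, the automorphism $\alpha^v$ is strongly outer, so it suffices to treat each $v$ separately (note that one reads $v\neq 0$ here, since $A\rtimes_{\id}\Z\cong A\otimes C(\T)$ never has injective restriction). For a fixed $v$, write $\beta=\alpha^v$ and recall the standard dictionary: for a $\beta$-invariant extreme trace $\tau$, the canonical conditional expectation $E$ onto $A$ yields the trace $\tau\circ E$ on the crossed product, and $\tau\circ E$ is the \emph{unique} trace restricting to $\tau$ precisely when the weak extension $\bar\beta$ to the factor $\pi_\tau(A)''$ is outer; if instead $\bar\beta=\Ad(u)$ is inner, the fibre over $\tau$ is a nondegenerate family parametrized by $\prob(\T)$ and injectivity fails. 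Using hypothesis (3), every trace is invariant, so $T(A)^{\alpha^v}=T(A)$ and its extreme points are genuine factor traces; the Bauer-simplex structure then lets me pass from injectivity of $r$ on all of $T(A)$ down to the extreme boundary. Thus $r$ is injective for $v$ iff $\bar{\alpha^v}$ is outer on $\pi_\tau(A)''$ for every extreme $\tau$, i.e.\ iff $\alpha^v$ is strongly outer; running this over all $v\neq 0$ yields (ii)$\Leftrightarrow$(iii).

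The one step requiring genuine care---and the main obstacle---is the Choquet-theoretic reduction in the last paragraph: turning the pointwise (von Neumann level) dictionary ``outer $\Leftrightarrow$ unique extension of $\tau$'' into the global statement that the affine continuous map $r$ on the whole simplex $T(A\rtimes_\beta\Z)$ is injective. Here I would disintegrate a crossed-product trace over $\partial_e T(A)$, using that $T(A)$ is a Bauer simplex (so $\partial_e T(A)$ is compact and every trace is the barycenter of a unique boundary measure) to match fibres measurably and conclude that singleton fibres over extreme points force $r$ to be globally injective. Since Remark~\ref{rem:trace} already carries out exactly this bookkeeping, in the write-up I would simply cite it for (ii)$\Leftrightarrow$(iii) and combine it with Theorem~\ref{thm:main} and Proposition~\ref{prop:main-converse} for (i)$\Leftrightarrow$(iii).
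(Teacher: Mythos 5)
Your proposal is correct, and it assembles the same three ingredients the paper uses (Theorem~\ref{thm:main} for (iii)$\Rightarrow$(i), Proposition~\ref{prop:main-converse} for (i)$\Rightarrow$(iii), and the trace statement of Remark~\ref{rem:trace}), but you close the cycle of implications along a genuinely different route for the clause involving (ii). The paper's Remark~\ref{rem:trace} records the implication (i)$\Rightarrow$(ii): injectivity of $r$ is extracted from the Rokhlin towers themselves, via the estimate $\varphi(au^n)=0$ obtained by approximating $1_A$ by the sum of the Rokhlin elements. You instead prove (ii)$\Leftrightarrow$(iii) directly at the von Neumann level, with no reference to Rokhlin towers: outerness of the weak extension on the factor fibres forces uniqueness of the trace extension, and innerness produces a $\prob(\T)$-parametrized family of extensions. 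This is a legitimate and slightly more self-contained organization; note in particular that the direction ``inner weak extension $\Rightarrow$ $r$ not injective'' (equivalently (ii)$\Rightarrow$(iii)) is needed by \emph{both} arguments to make the three conditions genuinely equivalent, and the paper leaves it entirely implicit while you at least sketch it. What your route costs is the Choquet-theoretic globalization step you correctly flag --- passing from uniqueness of extensions over each extreme trace to injectivity of $r$ on the whole simplex requires disintegrating a crossed-product trace over $\partial_e T(A)$, and you should also note that strong outerness at non-extreme invariant traces must be recovered from outerness on the factor fibres (outer but not properly outer is the danger on a non-factor); the Bauer hypothesis and hypothesis (3), which make every extreme invariant trace a factor trace, are exactly what make this work. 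Your observation that (ii) must be read with $v\neq 0$ (since $A\rtimes_{\mathrm{id}}\Z$ never has injective restriction) is a correct reading of a point the paper's statement glosses over.
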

\noindent While in general it is not easy to verify that a given action has finite Rokhlin dimension, strong outerness can often be checked explicitly. As an example, we study Bernoulli shifts in Section 6 and make the following observation.

\begin{prop} (Proposition \ref{prop:bernoulli-shift})
Let $A$ be a unital simple $C^*$-algebra with a unique trace. Suppose $A$ is not isomorphic to the algebra of complex numbers $\C$. Then the Bernoulli action $\sigma:\Z^m\to \Aut(\bigotimes_{\Z^m} A)$ is strongly outer. 
\end{prop}
\noindent In addition to this proposition, we observe that the Rokhlin dimension of a Bernoulli shift can be bounded by the Rokhlin dimension of a subshift (Proposition \ref{prop:subalgebra-trick}). Combining these two results, we obtain a large class of examples of $\Z^m$-actions with finite Rokhlin dimension. 

Let us describe how the paper is organized. In Section 2 we review the definition of Rokhlin dimension for $\Z^m$-actions. We show that as in the case of automorphisms, strong outerness is a necessary condition for an $\Z^m$-action to have finite Rokhlin dimension. We also review the notion of continuous $W^*$-bundles and discuss group actions on them. In Section 3 we prove a Rokhlin type theorem for $\Z^m$-actions on continuous $W^*$-bundles. The ideas and techniques are almost identical to the case of automorphisms, hence we will only outline the important steps and we refer the reader to \cite{Lia16} for full details. Section 4 is devoted to the technical notion we call the ``approximate Rokhlin property'', which already appeared in our previous paper. Here it serves the same purpose: a bridge connecting the Rokhlin property at the level of $W^*$-bundles and Rokhlin dimension at the level of $C^*$-algebras. Section 5 contains a simple but crucial observation (Proposition \ref{prop:Zm-Rokhlin}), which allows us to obtain finite Rokhlin dimension for $\Z^m$-actions from ``equivariant Rokhlin towers for automorphisms'' This section also contains a technical result which establishes the existence of these Rokhlin towers. Combining the observations and results from previous sections, we establish Theorem \ref{thm:1-1}. In the final section, Section 6, we study Bernoulli shifts on infinite tensor products. Combining the Rokhlin type theorem from Section 5 and an observation involving subshifts, we show that for a large class of $C^*$-algebras, the $\Z^m$-Bernoulli shift has finite Rokhlin dimension.  \\

\noindent\textbf{Acknowledgment:} Much of this work was done under the Pritchard Dissertation Fellowship granted by the Department of Mathematics at Pennsylvania State University. The author wishes to thank his dissertation advisor, Nate Brown, for many valuable discussions, including the suggestion of Corollary \ref{lem:bernoulli}.  The author also wishes to thank an anonymous referee for many helpful comments and the suggestion of Proposition \ref{prop:subalgebra-trick}, which streamlines the proof of Corollary \ref{lem:bernoulli}.

\section{Preliminaries}

Let us first fix some notations. Let $A$ be a $C^*$-algebra, $\Gamma$ be a discrete group, and $\alpha:\Gamma\to \Aut(A)$ be a group action. We write $\alpha^g$ for the automorphism $\alpha(g)\in \Aut(A)$. Suppose the trace space $T(A)$ of $A$ is nonempty, then the set of $\alpha^g$-invariant traces will be denoted $T(A)^{\alpha^g}$. Given any $\tau\in T(A)$, we write $(\pi_\tau, \hh_\tau)$ for the GNS representation associated to $\tau$.

\begin{defn} (\cite[Definition 2.7]{MS12b}) Let $A$ be a unital $C^*$-algebra with nonempty trace space $T(A)$, and let $\Gamma$ be a discrete group. An action $\alpha:\Gamma\to \Aut(A)$ is called \emph{strongly outer} if for every $g\in \Gamma\setminus\{e\}$ and every $\tau\in T(A)^{\alpha^g}$, the weak extension $\tilde{\alpha}^g\in \Aut(\pi_\tau(A)'')$ is an outer automorphism.

\end{defn}

From now on we focus on the case $\Gamma = \Z^m$. We use $\xi_1,...,\xi_m$ for the standard generators of $\Z^m$ (i.e., $\xi_i$ is the unit vector with 1 on the i-th place and 0's elsewhere). The automorphisms $\alpha^{\xi_i}$ will be denoted $\alpha^i$ for short.
For each $n\in \N$, we write $B_n^m$ for the ``box'' $\{0,1,...,n-1\}^m$ in $\Z^m$. When there is no possible confusion, we drop the dimension $m$ and simply write $B_n$.

Below we recall the definition of Rokhlin dimension for $\Z^m$-actions. Roughly speaking, an action has finite Rokhlin dimension if we can find families of positive contractions in the algebra which are almost central, almost cyclically permuted by the action, almost mutually orthogonal, and almost sum to 1.

\begin{defn} \label{defn:Zm-Rok} (\cite[Definition 1.6]{Sza15}) Let $A$ be a unital $C^*$-algebra, and let $\alpha:\Z^m\to \Aut(A)$ be a group action. We say that the action $\alpha$ has Rokhlin dimension at most $d$, and write $\rdim(\alpha) \leq d$, if for any finite subset $F\subseteq A$, $\e > 0$, and $n\in \N$, there exist positive contractions 
	$$
	(f_v^{(\ell)})_{v\in B_n}^{\ell=0,1,...,d}
	$$
	in $A$ satisfying the following properties:
	\begin{enumerate}
		\item[(1)] $\| \alpha^w(f_v^{(\ell)}) - f^{(\ell)}_{(v+w \mod n\Z^m)}\| < \e \;\;\;\;\;\; (0\leq \ell\leq d,\; v\in B_n,\; w\in \Z^m)$.
		\item[(2)] $\| f^{(\ell)}_v f^{(\ell)}_{v'} \| < \e\;\;\;\;\;\; (0\leq \ell \leq d, \; v,v'\in B_n,\; v\neq v')$.
		\item[(3)] $\| [f_v^{(\ell)}, a  ] \| < \e\;\;\;\;\;\; (0\leq \ell\leq d,\; v\in B_n,\; a\in F)$.
		\item[(4)] $\left\|  \sum_{\ell=0}^d\sum_{v\in B_n}f_v^{(\ell)} - 1_A \right\| < \e$.
	\end{enumerate}
\end{defn}

As in the case of $\Z$-actions, strong outerness is a necessary condition for an action to have finite Rokhlin dimension (when the $C^*$-algebra has traces).

\begin{prop} \label{prop:main-converse} Let $A$ be a unital $C^*$-algebra with nonempty trace space $T(A)$, and let $\alpha:\Z^m\to \Aut(A)$ be a group action. If $\rdim(\alpha) = d < \infty$, then $\alpha$ is strongly outer.
\end{prop}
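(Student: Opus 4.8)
The plan is to argue by contradiction, transferring the Rokhlin relations into the tracial von Neumann algebra attached to a trace and playing the near-centrality of the Rokhlin elements (condition (3)) against an inner implementation of the action. Suppose $\rdim(\alpha)\le d<\infty$ but $\alpha$ is not strongly outer. Then there are $v_0\in\Z^m\setminus\{0\}$ and $\tau\in T(A)^{\alpha^{v_0}}$ such that the weak extension $\tilde\alpha^{v_0}\in\Aut(M)$, where $M:=\pi_\tau(A)''$, is inner, say $\tilde\alpha^{v_0}=\Ad(u)$ for a unitary $u\in M$. Write $\bar\tau$ for the normal tracial state on $M$ induced by $\tau$ and $\|x\|_2:=\bar\tau(x^*x)^{1/2}$ for the associated seminorm; recall $\|1\|_2=1$, $\|x\|_2\le\|x\|$, and that $\|\cdot\|_2$ is invariant under right multiplication by unitaries. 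Fix the smallest integer $n\ge 2$ with $v_0\not\equiv 0\pmod n$, so that for each $v\in B_n$ the index $v':=v+v_0\bmod n$ lies in $B_n$ and differs from $v$.

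First I would pin the unitary down inside $A$: since $\pi_\tau(A)_{\le 1}$ is $\|\cdot\|_2$-dense in the unit ball of $M$ (Kaplansky density), choose $a\in A$ with $\|a\|\le 1$ and $\|\pi_\tau(a)-u\|_2<\delta$, for a parameter $\delta$ to be fixed below. Then apply Definition \ref{defn:Zm-Rok} with this $n$, with $F=\{a\}$, and with $\e$ small, obtaining positive contractions $(f_v^{(\ell)})$; set $\bar f_v^{(\ell)}:=\pi_\tau(f_v^{(\ell)})$. The crucial point --- and the step I expect to be the main obstacle --- is to convert the norm near-commutation of condition (3), which only involves elements of $A$, into $\|\cdot\|_2$-near-commutation with $u$ itself, which lives only in $M$ and cannot be approximated in norm. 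This is exactly what the choice of $a$ buys: writing $u=(u-\pi_\tau(a))+\pi_\tau(a)$ and estimating the two commutators separately (the first via the $\|\cdot\|_2$-bound $\delta$ together with boundedness of $\bar f_v^{(\ell)}$ and the trace property, the second via condition (3)), one gets $\|[\bar f_v^{(\ell)},u]\|_2\lesssim\delta+\e$. Since $\tilde\alpha^{v_0}=\Ad(u)$ and $\|\cdot\|_2$ is right-unitarily invariant, this yields $\|\tilde\alpha^{v_0}(\bar f_v^{(\ell)})-\bar f_v^{(\ell)}\|_2\lesssim\delta+\e$.

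It then remains to collide this estimate with the permutation and orthogonality relations. The permutation condition (1) for $w=v_0$ gives $\|\tilde\alpha^{v_0}(\bar f_v^{(\ell)})-\bar f_{v'}^{(\ell)}\|_2<\e$ (using $\pi_\tau\circ\alpha^{v_0}=\tilde\alpha^{v_0}\circ\pi_\tau$ and $\|\cdot\|_2\le\|\cdot\|$), so combining with the previous line the two distinct indices in the same level satisfy $\|\bar f_v^{(\ell)}-\bar f_{v'}^{(\ell)}\|_2\lesssim\delta+\e$. On the other hand, orthogonality (2) gives $|\bar\tau(\bar f_v^{(\ell)}\bar f_{v'}^{(\ell)})|=|\tau(f_v^{(\ell)}f_{v'}^{(\ell)})|<\e$. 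Feeding the near-equality into $\|\bar f_v^{(\ell)}\|_2^2=\bar\tau(\bar f_v^{(\ell)}\bar f_v^{(\ell)})$ and replacing one factor by $\bar f_{v'}^{(\ell)}$ via Cauchy--Schwarz shows $\|\bar f_v^{(\ell)}\|_2^2\lesssim\delta+\e$ for every $v\in B_n$ and every $\ell$.

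Finally, the partition-of-unity condition (4) gives $\|\sum_{\ell,v}\bar f_v^{(\ell)}-1\|_2<\e$, whence the triangle inequality forces $1=\|1\|_2\le\e+(d+1)n^m\sqrt{C(\delta+\e)}$ for an absolute constant $C$. Choosing first $n$ (depending on $v_0$ alone), then $\delta$ small compared with $\big((d+1)n^m\big)^{-2}$, and finally $\e$ sufficiently small, makes the right-hand side strictly less than $1$, a contradiction. Hence no weak extension $\tilde\alpha^{v_0}$ can be inner, i.e.\ $\alpha$ is strongly outer.
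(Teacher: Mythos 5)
Your argument is correct, and it takes a genuinely different route from the paper. The paper's proof passes through the crossed product: it first shows that every tracial state $\varphi$ on $A\rtimes_{\alpha^v}\Z$ satisfies $\varphi(au^n)=0$ for $n\neq 0$ (by approximating $1_A$ with the sum of the Rokhlin elements), and then invokes Kishimoto's argument (\cite[Lemma 4.4]{Kis96}) to conclude that no weak extension of $\alpha^v$ can be inner. You instead argue entirely inside the tracial von Neumann algebra $\pi_\tau(A)''$: assuming $\tilde\alpha^{v_0}=\Ad(u)$, you approximate $u$ in $\|\cdot\|_2$ by a contraction from $A$ via Kaplansky density, feed that contraction into condition (3) of Definition \ref{defn:Zm-Rok} to get $\|[\bar f_v^{(\ell)},u]\|_2\lesssim\delta+\e$, and then play the resulting near-invariance of the towers under $\tilde\alpha^{v_0}$ against the permutation and orthogonality relations to force every $\|\bar f_v^{(\ell)}\|_2$ to be small, contradicting condition (4). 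All the individual steps check out (the order of quantifiers is handled correctly: $n$ depends only on $v_0$, $\delta$ on $n$ and $d$, the approximant $a$ on $\delta$, and $\e$ is chosen last; and the existence of some $n\ge 2$ with $v_0\not\equiv 0 \bmod n$ is guaranteed by $v_0\neq 0$). The trade-off is that your direct argument is more self-contained — it avoids the crossed product and the citation to Kishimoto — whereas the paper's route yields as a byproduct the injectivity of the restriction map $T(A\rtimes_{\alpha^v}\Z^m)\to T(A)^{\alpha^v}$ recorded in Remark \ref{rem:trace}, which is used in the corollary stated in the introduction; your proof does not give that additional statement.
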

\begin{proof}
	Since the argument is almost identical to the one for implication (1) $\implies$ (3) of \cite[Corollary 1.2]{Lia16}, we only sketch the proof here. Given $v\in \Z^m$, let $u$ be the implementing unitary in the crossed product $A\rtimes_{\alpha^v}\Z$. Given any tracial state $\varphi$ on $A\rtimes_{\alpha^v}\Z$,  one can show that $\varphi(au^n) = 0$ for any $a\in A$ and $n\in \N$ by approximating $1_A$ by the sum of the Rokhlin elements. Then the proof of \cite[Lemma 4.4]{Kis96} shows that a weak extension of $\alpha^v$ cannot be inner.
\end{proof}

\begin{rem} \label{rem:trace}
Note that the argument above also shows that for each $v\in \Z^m\setminus \{0\}$, the restriction map $r:T(A\rtimes_{\alpha^v}\Z)\to T(A)^{\alpha^v}$ is injective (hence bijective).
\end{rem}

We briefly review some basic definitions of continuous $W^*$-bundles, which was introduced by Ozawa in \cite{Oza13} (see also \cite{BBSTWW15}). A \emph{continuous $W^*$-bundle} consists of the following data:
\begin{itemize}
	\item a $C^*$-algebra $\hm$,
	\item a compact metrizable space $K$ such that $C(K)$ sits inside the center of $\hm$, and
	\item a faithful conditional expectation $E:\hm\to C(K)$ satisfying the tracial condition $E(ab) = E(ba)$ for all $a,b\in \hm$,
\end{itemize} 
such that the norm-closed unit ball of $\hm$ is complete in the \emph{uniform 2-norm} $\|\cdot\|_{2,u}$, defined by
$$
\|x\|_{2,u} := \| E(x^*x) \|^{1/2}. 
$$
For each $\lambda\in K$, the GNS representation $\pi_\lambda(\hm)$ coming from the tracial state $\tau_\lambda := ev_\lambda\circ E$ is called a \emph{fiber} of the bundle $(\hm, K, E)$.

Given a unital simple separable $C^*$-algebra with nonempty trace space $T(A)$, we define the \emph{uniform 2-norm} $\|\cdot\|_{2,u}$ on $A$ by
$$
\|a\|_{2,u} := \sup_{\tau\in T(A)}\tau(a^*a)^{1/2}.
$$
Let $\overline{A}^u$ be the completion of $A$ with respect to $\|\cdot\|_{2,u}$. Ozawa proved in \cite{Oza13} that when $T(A)$ is a \emph{Bauer simplex}, i.e., when the extreme boundary $\partial_e T(A)$ is compact, the set of continuous functions on $\partial_e T(A)$ can be embedded into the center of $\overline{A}^u$. Moreover, there is a faithful tracial conditional expectation
$$
E:\overline{A}^u \to C(\partial_e T(A))
$$
satisfying $E(a)(\tau) = \tau(a)$ for each $a\in A$ and $\tau\in \partial_e T(A)$. Therefore $(\overline{A}^u, \partial_e T(A), E)$ has the structure of a continuous $W^*$-bundle. In this case each fiber $\pi_\tau(\overline{A}^u)$ is isomorphic to the weak closure $\pi_\tau(A)''$.

Given a free ultrafilter $\omega$ on $\N$, one can form the \emph{ultrapower} $(\hm^\omega, E^\omega, K^\omega)$ and the \emph{central sequence $W^*$-bundle} $(\hm^\omega\cap \hm', E^\omega, K^\omega)$ of a continuous $W^*$-bundle $(\hm, K, E)$ in a way similar to the case of tracial von Neumann algebras. The uniform 2-norm on $\hm^\omega$ will be denoted $\|\cdot\|_{2,u}^{(\omega)}$. For details, we refer the reader to \cite{BBSTWW15} and \cite{Lia16}.

Now we discuss group actions on continuous $W^*$-bundles. Let $(\hm, K, E)$ be a continuous $W^*$-bundle and let $\Gamma$ be a discrete group. A \emph{$\Gamma$-action} on $(\hm, K, E)$ consists of a pair $(\beta, \sigma)$, where
$$
\beta:\Gamma\to \Aut(\hm)\;\;\;\;\;\; \text{ and }\;\;\;\;\;\; \sigma:\Gamma \to \text{Homeo}(K)
$$
are group homomorphisms, such that the diagram
\begin{center}
\begin{tikzpicture}[node distance=2cm, auto]
  \node (1) {$\hm$};
  \node (2) [right of=1] {$\hm$};
  \node (6) [below of=1] {$C(K)$};
  \node (7) [below of=2] {$C(K)$};
  \draw[->] (1) to node {$\beta^g$} (2);
  \draw[->] (6) to node {$\bar{\sigma}^g$} (7);
  \draw[->] (2) to node {$E$} (7);
  \draw[->] [swap] (1) to node {$E$} (6);
\end{tikzpicture}.
\end{center}
commutes for every $g\in \Gamma$ (here $\bar{\sigma}:\Gamma\to \Aut(C(K))$ is the action induced by $\sigma$). Given a group action $(\beta, \sigma):\Gamma\to \Aut(\hm, K, E)$, for each $g\in \Gamma$ and $\lambda\in K$, there is a well-defined *-isomorphism
\begin{align*}
\beta_\lambda^g: \pi_\lambda(\hm) &\to \pi_{(\sigma_g)^{-1}(\lambda)}(\hm),\\
\pi_\lambda(x)&\mapsto  \pi_{(\sigma_g)^{-1}(\lambda)}(\beta^g(x)).
\end{align*}
When $\sigma$ is the trivial action on $K$, this gives rise to a group action
$$
\beta_\lambda:\Gamma\to \Aut(\pi_\lambda(\hm)).
$$

Let $\omega$ be a free ultrafilter on $\N$. A group action $(\beta, \sigma)$ can be promoted to an action on the ultrapower or the central sequence $W^*$-bundle. More precisely, for each $g\in \Gamma$ we have automorphisms
\begin{align*}
\beta_\omega^g:\hm^\omega&\to \hm^\omega,\\
(x_n)_n&\mapsto (\beta^g(x_n))_n,
\end{align*}
and
\begin{align*}
\bar{\sigma}_\omega^g:C(K^\omega)&\to C(K^\omega),\\
(f_n)_n&\mapsto (f_n\circ \sigma_g)_n
\end{align*}
(recall that we can identify $C(K^\omega)$ with the $C^*$-ultrapower $C(K)_\omega$). Therefore we obtain actions
$$
\beta_\omega:\Gamma\to \Aut(\hm^{\omega})\;\;\;\;\;\; \text{ and }\;\;\;\;\;\; \bar{\sigma}_\omega:\Gamma \to \Aut(C(K^\omega)).
$$
One easily checks that the diagram
\begin{center}
\begin{tikzpicture}[node distance=2cm, auto]
  \node (1) {$\hm^\omega$};
  \node (2) [right of=1, node distance=2.5cm] {$\hm^\omega$};
  \node (6) [below of=1] {$C(K^\omega)$};
  \node (7) [below of=2] {$C(K^\omega)$};
  \draw[->] (1) to node {$\beta_\omega^g$} (2);
  \draw[->] (6) to node {$\bar{\sigma}_\omega^g$} (7);
  \draw[->] (2) to node {$E^\omega$} (7);
  \draw[->] [swap] (1) to node {$E^\omega$} (6);
\end{tikzpicture}.
\end{center}
commutes for every $g\in \Gamma$. We write $(\hm^\omega\cap \hm')^{\beta_\omega}$ for the fixed point subalgebra of the central sequence $W^*$-bundle $\hm^\omega\cap \hm'$.

\section{$W^*$-Rokhlin property}

The goal of this section is to prove a Rokhlin type theorem for $\Z^m$-actions on continuous $W^*$-bundles. Since the argument is almost the same as \cite[Theorem 4.1]{Lia16}, we will be terse for most of the time. Our first objective is to embed arbitrarily large matrices into the fixed point algebra of the central sequence $W^*$-bundle, as in the following proposition and corollary. Recall that a completely positive (c.p.) map $\varphi:A\to B$ between $C^*$-algebras is called \emph{order zero} if $\varphi(a)\varphi(b) = 0$ for all positive elements $a,b\in A$ satisfying $ab=0$.

\begin{prop} \label{prop:fixedpointsubalgebra} (cf. \cite[Proposition 3.4]{Lia16}) Let $(\hm, K, E)$ be a separable continuous $W^*$-bundle, meaning that $\hm$ contains a countable $\|\cdot\|_{2,u}$-dense subset, and $(\beta,\sigma):\Z^m\to \Aut(\hm, K, E)$ be a group action. Suppose
\begin{enumerate}
	\item[(1)] the action is trivial on $K$, i.e., $\sigma = \id_K$;
	\item[(2)] $\dim(K) = d < \infty$;
	\item[(3)] each fiber $\pi_\lambda(\hm)$ is isomorphic to the hyperfinite II$_1$ factor $\hr$;
	\item[(4)] the fiber action $\beta:\Z^m\to \Aut(\pi_\lambda(\hm))$ is pointwise outer for each $\lambda\in \K$.
\end{enumerate}
Then for each $p\in \N$, there exist completely positive contractive (c.p.c.) order zero maps $\psi_1,...,\psi_{d+1}:M_p(\C)\to (\hm^\omega\cap \hm')^{\beta_\omega}$ with commuting ranges such that $\psi_1(1)+\cdots + \psi_{m+1}(1) = 1_\hm$.
\end{prop}

\begin{cor} (cf. \cite[Proposition 3.3]{Lia16}) Let $(\hm, K, E)$ be a separable continuous $W^*$-bundle and $(\beta,\sigma):\Z^m\to \Aut(\hm, K, E)$ be a group action. Suppose
	\begin{enumerate}
		\item[(1)] the action is trivial on $K$, i.e., $\sigma = \id_K$;
		\item[(2)] $\dim(K) = d < \infty$;
		\item[(3)] each fiber $\pi_\lambda(\hm)$ is isomorphic to the hyperfinite II$_1$ factor $\hr$;
		\item[(4)] the fiber action $\beta:\Z^m\to \Aut(\pi_\lambda(\hm))$ is pointwise outer for each $\lambda\in \K$.
	\end{enumerate}
Then for every $p\in \N$, there is a unital *-homomorphism $\sigma:M_p(\C)\to  (\hm^\omega\cap \hm')^{\beta_\omega}$.
\end{cor}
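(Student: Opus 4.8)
The plan is to deduce the corollary from Proposition \ref{prop:fixedpointsubalgebra} by means of a general principle: in a von Neumann--type algebra, finitely many c.p.c.\ order zero maps out of $M_p(\C)$ with commuting ranges whose values at the unit sum to $1$ can always be assembled into a single unital $*$-homomorphism. Concretely, let $\psi_1,\dots,\psi_{d+1}:M_p(\C)\to (\hm^\omega\cap \hm')^{\beta_\omega}$ be the maps supplied by the proposition, and set $h_\ell=\psi_\ell(1)$, so that $\sum_\ell h_\ell=1$.

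First I would invoke the Winter--Zacharias structure theorem for order zero maps: each $\psi_\ell$ factors as $\psi_\ell(x)=h_\ell\,\pi_\ell(x)$, where $\pi_\ell$ is a $*$-homomorphism whose range commutes with $h_\ell$. The key technical input is that $(\hm^\omega\cap \hm')^{\beta_\omega}$ is complete in $\|\cdot\|_{2,u}^{(\omega)}$ and is fibered over von Neumann algebras, so that bounded Borel functional calculus, support projections, and the supporting homomorphism $\pi_\ell$ all make sense inside the ambient completion. Because $\beta_\omega$ is trace-preserving and $\|\cdot\|_{2,u}^{(\omega)}$-continuous, it commutes with these operations; since each $h_\ell$ already lies in the fixed-point central-sequence algebra and is central (commutes with $\hm$), the support projection $q_\ell$ (the unit of $\pi_\ell$) and the homomorphism $\pi_\ell$ again take values in $(\hm^\omega\cap \hm')^{\beta_\omega}$.

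Next I would exploit the commuting-ranges hypothesis. Since the ranges of the $\psi_\ell$ commute, all the $h_\ell$ commute with one another and with every $\pi_{\ell'}(M_p(\C))$; consequently the support projections $q_\ell$ commute with one another and with every $\pi_{\ell'}(M_p(\C))$. I then disjointify the supports by setting $P_\ell=q_\ell\prod_{k<\ell}(1-q_k)$, obtaining pairwise orthogonal projections with $\sum_\ell P_\ell=\bigvee_\ell q_\ell$. The relation $\sum_\ell h_\ell=1$ forces $\bigvee_\ell q_\ell=1$, since any projection orthogonal to every $q_\ell$ would annihilate $\sum_\ell h_\ell=1$; hence $\sum_\ell P_\ell=1$. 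Defining $\rho:M_p(\C)\to(\hm^\omega\cap \hm')^{\beta_\omega}$ by $\rho(x)=\sum_\ell P_\ell\,\pi_\ell(x)$, the fact that each $P_{\ell'}$ commutes with every $\pi_\ell(M_p(\C))$ makes the cross terms collapse, $P_\ell\pi_\ell(x)P_{\ell'}\pi_{\ell'}(y)=\delta_{\ell\ell'}P_\ell\pi_\ell(xy)$, so that $\rho$ is multiplicative; it is manifestly self-adjoint and satisfies $\rho(1)=\sum_\ell P_\ell=1$. This $\rho$ is the desired unital $*$-homomorphism.

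I expect the main obstacle to be the second step: verifying that the order zero structure theorem and the passage to support projections are legitimate inside the fixed-point central-sequence $W^*$-bundle $(\hm^\omega\cap \hm')^{\beta_\omega}$, and that the resulting projections and homomorphisms remain central and $\beta_\omega$-invariant. Once this von Neumann--style functional calculus is justified, the remainder is the purely algebraic disjointification above, which is routine and parallels \cite[Proposition 3.3]{Lia16}.
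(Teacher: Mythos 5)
The paper's own proof is a one-line citation: the conclusion ``order zero maps with commuting ranges summing to $1$ at the unit $\Rightarrow$ unital embedding of $M_p(\C)$'' is outsourced entirely to \cite[Lemma 7.6]{KR14}. What you are doing is re-proving that lemma by hand, via the standard von Neumann algebra argument: write $\psi_\ell = h_\ell\pi_\ell(\cdot)$ using the order zero structure theorem, pass to the support projections $q_\ell$ of the $h_\ell$, disjointify, and patch. The purely algebraic part of your argument (commutation of the $W^*$-algebras generated by the ranges, $\prod_\ell(1-q_\ell)\cdot\sum_\ell h_\ell=0$ forcing $\bigvee_\ell q_\ell=1$, and the collapse of cross terms in $\rho(x)=\sum_\ell P_\ell\pi_\ell(x)$) is correct and would complete the proof if the target were a finite von Neumann algebra.

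The gap is precisely at the step you flag as ``the main obstacle'' and then assert away: support projections and supporting $*$-homomorphisms are \emph{not} available in $(\hm^\omega\cap\hm')^{\beta_\omega}$ merely because its unit ball is $\|\cdot\|_{2,u}^{(\omega)}$-complete and its fibers are von Neumann algebras. The central sequence object here is a continuous $W^*$-bundle over the (non-trivial, non-discrete) base $K^\omega$, not a von Neumann algebra, and $W^*$-bundles do not admit Borel functional calculus: the sequence $h^{1/k}$ converges $\sigma$-strongly to $\operatorname{supp}(h)$ in every fiber, but the rate is not uniform over the base, so the limit need not exist in the uniform $2$-norm. Already in a trivial bundle such as $C(K)\mathbin{\bar\otimes}\hr$ one finds positive elements (e.g.\ with fiberwise spectral mass accumulating at $0$ at a $\lambda$-dependent rate) possessing no support projection in the bundle; the same obstruction blocks the norm- or $\|\cdot\|_{2,u}$-convergence of $f_k(h_\ell)\psi_\ell(x)$ to $\pi_\ell(x)$. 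Replacing $q_\ell$ by continuous functions of $(h_1,\dots,h_{d+1})$ supported where $h_\ell$ is bounded below avoids the functional-calculus issue but destroys the orthogonality needed for multiplicativity of $\rho$. So the reduction to ``routine disjointification'' does not go through as stated; one genuinely needs the content of \cite[Lemma 7.6]{KR14} (equivalently the argument of \cite[Proposition 3.3]{Lia16}), which handles the ultraproduct $W^*$-bundle setting by different means, rather than fiberwise von Neumann functional calculus promoted to the bundle.
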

\begin{proof} This follows immediately from Proposition \ref{prop:fixedpointsubalgebra} and \cite[Lemma 7.6]{KR14}.
\end{proof}

To prove Proposition \ref{prop:fixedpointsubalgebra}, we make use of the finite dimensionality of $K$ and build a collection of c.p.c. maps from a matrix algebra $M_p(\C)$ into $\hm$ which are almost central, almost invariant under the action $\beta$, and have almost commuting ranges. The following lemma is simply a $\Z^m$-version of \cite[Lemma 3.8]{Lia16} (with identical proof).

\begin{lem}
	\label{lem:fixpointcpc}
	Under the same assumptions as Proposition \ref{prop:fixedpointsubalgebra}, for every $\e > 0, p\in \N$, and norm bounded $\|\cdot\|_{2,u}$-compact subset $\Omega\subseteq \hm$, there exist c.p.c. maps $\psi^{(1)},...,\psi^{(d+1)}:M_p(\C)\to \hm$ (remember that $d = \dim(K)$) such that
	\begin{enumerate}
		\item[(i)] $\|[\psi^{(\ell)}(e),x]\|_{2,u} < \e$\;\;\;\;\;\; $(1\leq \ell\leq d+1,\; e\in M_p(\C)_1,\; x\in \Omega)$;
		\item[(ii)] $\| [\psi^{(\ell)}(e), \psi^{(k)}(f) ] \|_{2,u} < \e$\;\;\;\;\;\; $(1\leq \ell\neq k \leq d+1,\;e, f\in M_p(\C)_1)$;
		\item[(iii)] $\| \psi^{(\ell)}(1)\psi^{(\ell)}(e^*e) - \psi^{(\ell)}(e)^*\psi^{(\ell)}(e) \|_{2,u} < \e$\;\;\;\;\;\; $(1\leq \ell\leq d+1,\; e\in M_p(\C)_1)$;
		\item[(iv)] $\sum_{\ell=1}^{d+1}\psi^{(\ell)}(1)  = 1_\hm$;
		\item[(v)] $\| \beta^i(\psi^{(\ell)}(e))-\psi^{(\ell)}(e)\|_{2,u} < \e$\;\;\;\;\;\; $(1\leq i\leq m,\; 1\leq \ell\leq d+1,\; e\in M_p(\C)_1)$.
	\end{enumerate}
\end{lem}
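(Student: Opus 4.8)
The plan is to follow the architecture of the proof of \cite[Lemma 3.8]{Lia16} line by line; the only genuinely new ingredient is that the fiberwise Rokhlin input must now be supplied for the whole group $\Z^m$ rather than for a single outer automorphism. Fix $\lambda\in K$. By hypothesis (3) the fiber $(\pi_\lambda(\hm),\tau_\lambda)$ is isomorphic to $\hr$, and by hypothesis (4) the fiber action $\beta_\lambda:\Z^m\to\Aut(\pi_\lambda(\hm))$ is a pointwise outer action of the countable amenable group $\Z^m$. By Ocneanu's theorem on outer actions of amenable groups on the hyperfinite II$_1$ factor \cite{Ocn85}, the fixed-point algebra $(\hr_\omega\cap\hr')^{(\beta_\lambda)_\omega}$ of the central sequence algebra is again a II$_1$ factor, and in particular contains a unital copy of $M_p(\C)$. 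Unravelling the ultrapower, this says that for every $\delta>0$ there is a unital embedding of $M_p(\C)$ into $\pi_\lambda(\hm)$ that is $\delta$-central, $\delta$-invariant under each generator $\beta_\lambda^i$ simultaneously, and (being a genuine homomorphism) order zero, with all estimates measured in $\|\cdot\|_{2,\tau_\lambda}$. This simultaneity over all $m$ generators is precisely what the amenable-group form of Ocneanu's theorem delivers, whereas the single-automorphism case sufficed in \cite{Lia16}.

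Next I would promote these fiberwise embeddings to honest c.p.c.\ maps into $\hm$. Using the continuous $W^*$-bundle structure---in particular the continuity of $\mu\mapsto E(x^*x)(\mu)=\|x\|_{2,\tau_\mu}^2$ and the separability of $\hm$---the fiberwise embedding at $\lambda$ can be realized as the fiber reduction of a c.p.c.\ map $\phi_\lambda:M_p(\C)\to\hm$ for which the estimates (i), (iii), (v), and approximate unitality hold uniformly in $\|\cdot\|_{2,\tau_\mu}$ for all $\mu$ in some open neighborhood $U_\lambda$ of $\lambda$; here the $\|\cdot\|_{2,u}$-compactness of $\Omega$ is used to make the centrality estimate (i) uniform over $\Omega$. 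Compactness of $K$ then yields a finite subcover $U_{\lambda_1},\dots,U_{\lambda_N}$.

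Since $K$ is compact metrizable with $\dim(K)=d$, I refine this cover to a finite open cover whose index set splits into $d+1$ classes $V^{(1)},\dots,V^{(d+1)}$, the members of each class being pairwise disjoint (the standard covering-dimension colouring used throughout nuclear-dimension theory), and choose a subordinate partition of unity $\{h_U\}\subseteq C(K)\subseteq Z(\hm)$. For $1\le\ell\le d+1$ I would set
$$
\psi^{(\ell)}(e):=\sum_{U\in V^{(\ell)}} h_U\,\phi_U(e),
$$
which is legitimate because the $h_U$ are central. Within a fixed class the functions $h_U$ have disjoint supports, so each $\psi^{(\ell)}$ is c.p.c.\ and inherits the approximate order-zero estimate (iii) and the approximate invariance (v) from its summands; the centrality estimate (i) and the across-class commuting-range estimate (ii) follow from the local estimates together with the centrality of the $h_U$. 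Arranging each $\phi_U$ to be exactly unital (replacing an approximately unital c.p.\ map by a genuinely unital one perturbs the remaining estimates only slightly) gives $\sum_{\ell=1}^{d+1}\psi^{(\ell)}(1)=\sum_U h_U\,\phi_U(1)=\sum_U h_U=1_\hm$, which is (iv) exactly.

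The hard part will be the localization step. The fiberwise Ocneanu statement is a single-fiber assertion, and turning it into a map genuinely valued in $\hm$ whose defining estimates are uniform in the uniform $2$-norm over a whole neighborhood is the technical heart of the argument; this is exactly where I would lean on the detailed construction of \cite[Lemma 3.8]{Lia16}. By contrast the fiberwise input and the colouring combinatorics are, respectively, cited and standard.
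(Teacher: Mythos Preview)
Your proposal is correct and follows essentially the same approach as the paper: the paper simply declares this lemma to be the $\Z^m$-version of \cite[Lemma 3.8]{Lia16} with identical proof, and your sketch is precisely that proof, with Ocneanu's amenable-group result replacing the single-automorphism input at the fiber level.
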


Now the proof of Proposition \ref{prop:fixedpointsubalgebra} can be completed by mapping sequences of c.p.c. maps as found in Lemma \ref{lem:fixpointcpc} into the ultrapower:

\begin{proof} (of Proposition \ref{prop:fixedpointsubalgebra})
	Let $\{x_1,x_2,...\}$ be a countable $\|\cdot\|_{2,u}$-dense subset of $\hm$. By Lemma \ref{lem:fixpointcpc}, for each $n\in \N$ there exist c.p.c. maps $\psi_n^{(1)}, \psi_n^{(2)},...,\psi_n^{(d+1)}:M_p(\C)\to \hm$ such that
	(ii) to (v) in Lemma \ref{lem:fixpointcpc} hold with $\e = \frac{1}{n}$ and
	\begin{enumerate}
		\item[(i')] $\| [\psi_n^{(\ell)}(e), x_j] \|_{2,u} < \frac{1}{n}\;\;\;\;\;\; (1\leq \ell\leq d+1,\; e\in M_p(\C)_1,\; 1\leq j\leq n)$.
	\end{enumerate}
	For each $1\leq \ell\leq d+1$ define c.p.c. maps $\psi_\ell:M_p(\C)\to \hm^{\omega}$ by
	$$
	\psi_\ell(e) = \pi_\omega(\psi_1^{(\ell)}(e), \psi_2^{(\ell)}(e), \psi_3^{(\ell)}(e),... ),
	$$
	where $\pi_\omega$ is the quotient map from $\ell^\infty(\N,\hm)$ onto $\hm^\omega$. Then by construction 
	\begin{itemize}
		\item the image of $\psi_\ell$ belongs to $(\hm^\omega\cap \hm')^{\beta_\omega}$;
		\item $\psi_1, \psi_2,...,\psi_{d+1}$ have commuting ranges;
		\item $\psi_1(1)+\cdots + \psi_{d+1}(1) = 1_\hm$;
		\item $\psi_\ell(1)\psi_{\ell}(e^*e) = \psi_\ell(e)^*\psi_\ell(e)$\;\;\;\;\;\; $(1\leq \ell\leq d+1,\; e\in M_p(\C))$.
	\end{itemize}
	It remains to show that each $\psi_\ell$ is order zero, but this follows from the fourth bullet (see \cite[Lemma 3.9]{Lia16}).
\end{proof}

Now we define the $W^*$-Rokhlin property and state the Rokhlin type theorem.

\begin{defn}  \label{defn:W-Rok} Let $(\hm, K, E)$ be a separable continuous $W^*$-bundle and $(\beta,\sigma):\Z^m\to \Aut(\hm, K, E)$ be a group action. We say $(\beta, \sigma)$ has the \emph{$W^*$-Rokhlin property} if for every $n\in \N$ there exist projections $\{p_v\}_{v\in B_n}$ in $\hm^\omega\cap \hm'$ such that
	\begin{enumerate}
		\item[(1)] $(\beta_\omega)^u(p_v) = p_{(v+u \mod n\Z^m)}  \;\;\;\;\;\; (v\in B_n,\; u\in \Z^m)$.
		\item[(2)] $\sum_{v\in B_n} p_v = 1_\hm$.
		\item[(3)] $\| 1_\hm-p_0 \|_{2,u}^{(\omega)} < 1$.
	\end{enumerate}
\end{defn}

\begin{thm} \label{thm:W-Rokhlin} Let $(\hm, K, E)$ be a separable continuous $W^*$-bundle and $(\beta,\sigma):\Z^m\to \Aut(\hm, K, E)$ be a group action. Suppose
	\begin{enumerate}
		\item[(1)] the action is trivial on $K$, i.e., $\sigma = \id_K$;
		\item[(2)] $\dim(K) = d < \infty$;
		\item[(3)] each fiber $\pi_\lambda(\hm)$ is isomorphic to the hyperfinite II$_1$ factor $\hr$;
		\item[(4)] the fiber action $\beta:\Z^m\to \Aut(\pi_\lambda(\hm))$ is pointwise outer for each $\lambda\in K$.
	\end{enumerate}
	Then $(\beta, \sigma)$ has the $W^*$-Rokhlin property.
\end{thm}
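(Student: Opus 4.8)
The plan is to reduce the $W^*$-Rokhlin property to the construction of a commuting family of ``eigen-unitaries'' in the central sequence bundle, and then to produce these from the matrix embedding supplied by the corollary to Proposition~\ref{prop:fixedpointsubalgebra} together with a fiberwise Rokhlin argument. Write $\zeta = e^{2\pi i/n}$. The data required in Definition~\ref{defn:W-Rok} is equivalent to a commuting family of unitaries $u_1,\dots,u_m \in \hm^\omega\cap\hm'$ with $u_j^n = 1_\hm$ and
\[
\beta_\omega^{\xi_k}(u_j) = \bar\zeta^{\,\delta_{jk}}\,u_j \qquad (1\le j,k\le m).
\]
Indeed, given such unitaries one sets $p_v := \prod_{j=1}^m \tfrac1n\sum_{s=0}^{n-1}\bar\zeta^{\,s v_j}\,u_j^s$ for $v\in B_n$, the joint spectral projections; a direct computation turns the eigenvalue relations into property~(1) of Definition~\ref{defn:W-Rok} and gives $\sum_{v\in B_n}p_v = 1_\hm$, which is property~(2). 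Conversely $u_j = \sum_{v\in B_n}\zeta^{v_j}p_v$ recovers the unitaries from a tower. Thus the entire content of the theorem lies in the equivariance.

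First I would dispose of property~(3), which comes for free. Since $\sigma = \id_K$, the commuting square defining the action forces $E^\omega\circ\beta_\omega^u = E^\omega$ for all $u\in\Z^m$. By property~(1) every $p_v$ is a $\beta_\omega$-translate of $p_0$, so $E^\omega(p_v)$ is independent of $v$; as the $n^m$ projections sum to $1_\hm$ this gives $E^\omega(p_0) = n^{-m}1$, whence
\[
\bigl\|1_\hm - p_0\bigr\|_{2,u}^{(\omega)} = \bigl\|E^\omega(1_\hm - p_0)\bigr\|^{1/2} = (1 - n^{-m})^{1/2} < 1.
\]
So conditions~(2) and~(3) are automatic and the remaining task is to produce the unitaries $u_j$.

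For this I would follow \cite[Theorem 4.1]{Lia16} essentially verbatim. The corollary to Proposition~\ref{prop:fixedpointsubalgebra} gives, for any $p$, a unital $*$-homomorphism $M_p(\C)\to(\hm^\omega\cap\hm')^{\beta_\omega}$, providing a $\beta_\omega$-fixed reservoir of matrix units; this is the device through which the finite covering dimension $\dim(K)=d$ is absorbed, and it is what allows a fiberwise construction to be assembled into genuine elements of the bundle central sequence algebra rather than merely approximate or only fiberwise-defined ones. Pointwise outerness is the hypothesis that drives the rest: on each fiber $\pi_\lambda(\hm)\cong\hr$ it gives an outer, hence free, action of $\Z^m$, the setting of the Connes--Ocneanu noncommutative Rokhlin theorem \cite{Con76, Ocn85}, which supplies a commuting system of eigen-unitaries with exactly the relations above. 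Running the corresponding central-sequence argument at the bundle level inside the relative commutant of the fixed matrix embedding, and patching over $K$ by the dimension-theoretic argument of \cite{Lia16}, then yields the $u_j$ in $\hm^\omega\cap\hm'$.

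The main obstacle is precisely this patching step for a \emph{commuting} family, which is where the passage from $\Z$ to $\Z^m$ actually costs something. For a single automorphism one needs only one cyclic tower, whereas here one must realize the Rokhlin tower of a free $\Z^m$-action, i.e.\ produce $m$ eigen-unitaries that simultaneously commute and carry the mixed eigenvalue pattern $\bar\zeta^{\,\delta_{jk}}$. This is exactly why the hypothesis demands outerness of $\beta^v$ for \emph{all} $v\in\Z^m\setminus\{0\}$ rather than merely for the generators: the fiberwise Rokhlin theorem for the group $\Z^m$ requires the whole action to be free. Once the commuting eigen-unitaries are in hand the remaining bookkeeping---commutation with $\hm$, exact order $n$, and the passage through the ultrafilter---is routine and identical to the $\Z$-case, so I would only outline it and refer to \cite{Lia16} for the details.
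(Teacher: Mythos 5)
Your proposal is correct and takes essentially the same route as the paper: reduce to the fibers via the $\beta_\omega$-fixed matrix embedding coming from Proposition~\ref{prop:fixedpointsubalgebra} and a partition of unity over the finite-dimensional base $K$, invoke Ocneanu's Rokhlin theorem for the free $\Z^m$-action on each hyperfinite II$_1$ fiber, and defer the gluing details to \cite[Theorem 4.1]{Lia16} --- which is exactly what the paper's proof does. Your repackaging of the tower as commuting eigen-unitaries and your (valid) observation that condition (3) of Definition~\ref{defn:W-Rok} is forced by (1), (2) and $E$-invariance are cosmetic variations, not a different argument.
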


The idea of proving Theorem \ref{thm:W-Rokhlin} is the same as \cite[Theorem 4.1]{Lia16}. We first obtain a partition of unity which are almost mutually orthogonal in terms of the uniform 2-norm, and then glue the Rokhlin tower in each fiber (the fiberwise result was due to Ocneanu \cite{Ocn81}) to a global section. The following lemma provides the partition of unity we need.

\begin{lem} Let $(\hm, K, E)$ be a separable continuous $W^*$-bundle and $(\beta, \sigma):\Z^m\to \Aut(\hm,K,E)$ be a group action. Suppose
	\begin{enumerate}
		\item[(1)] the action is trivial on $K$, i.e., $\sigma = \id_K$;
		\item[(2)] for each $p\in \N$ there exists a unital *-homomorphism $\sigma:M_p(\C)\to (\hm^\omega \cap \hm')^{\beta_\omega}$.
	\end{enumerate}
Then given a norm bounded $\|\cdot\|_{2,u}$-compact subset $\Omega$ of $\hm$, $\e > 0$, and a partition of unity $\{g_j\}_{j=1}^N$ subordinate to some open cover of $K$, there exist positive contractions $a_1,a_2,...,a_N$ in $\hm$ such that
\begin{enumerate}
	\item[(i)] $\| [a_j,x] \|_{2,u} < \e\;\;\;\;\;\; (1\leq j\leq N,\; x\in \Omega)$;
	\item[(ii)] $\| (\beta_\omega)^i(a_j) - a_j \|_{2,u} < \e\;\;\;\;\;\; (1\leq i\leq m,\; 1\leq j\leq N)$.
	\item[(iii)] $\| a_ia_j\|_{2,u} < \e\;\;\;\;\;\; (1\leq i\neq j\leq N)$;
	\item[(iv)] $\| [a_j^{1/2}, x] \|_{2,u} < \e\;\;\;\;\;\;\; (1\leq j\leq N,\; x\in \Omega)$;
	\item[(v)] $\| a_i^{1/2}a_j^{1/2} \|_{2,u} < \e\;\;\;\;\;\;\; (1\leq i\neq j \leq N)$;
	\item[(vi)] $\left| \tau_\lambda(a_jx) - g_j(\lambda)\tau_\lambda(x) \right| < \e\;\;\;\;\;\; (1\leq j\leq N,\; x\in \Omega,\; \lambda\in K)$;
	\item[(vii)] $\sum_{j=1}^N a_j = 1_\hm$.
\end{enumerate}
\end{lem}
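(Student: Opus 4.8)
The plan is to transport the matrix embedding supplied by hypothesis (2) into a $C(K)$-weighted partition of unity, verify the asserted estimates at the level of $\hm^\omega$, and then descend to $\hm$ by choosing a representing sequence. Fix $p\in\N$, to be taken large at the end, and let $\phi:M_p(\C)\to(\hm^\omega\cap\hm')^{\beta_\omega}$ be the unital $*$-homomorphism provided by hypothesis (2). Writing $q_k:=\phi(e_{kk})$ for the images of the diagonal matrix units, the projections $q_1,\dots,q_p$ are mutually orthogonal and (a) lie in $\hm^\omega\cap\hm'$, hence commute with every $x\in\hm$; (b) are fixed by each $\beta_\omega^i$, since $\phi$ lands in the fixed-point algebra; (c) satisfy $\sum_k q_k=1_\hm$ and $\tau_\lambda^\omega(q_k)=\tfrac1p$ in every fiber (because $\tau_\lambda^\omega\circ\phi$ is a tracial state on $M_p(\C)$). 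I will assemble the $a_j$ as $C(K)$-linear combinations of the $q_k$.

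For the weights, set $G_0:=0$ and $G_j:=g_1+\cdots+g_j\in C(K)$ for $1\le j\le N$, so that $0=G_0\le G_1\le\cdots\le G_N=1$ and $G_j-G_{j-1}=g_j$. Define
$$
w_{j,k}(\lambda):=p\cdot\Big|\,\big[\tfrac{k-1}{p},\tfrac{k}{p}\big]\cap\big[G_{j-1}(\lambda),G_j(\lambda)\big]\,\Big|\qquad(1\le j\le N,\ 1\le k\le p),
$$
where $|\cdot|$ is Lebesgue measure on $[0,1]$. Each $w_{j,k}$ is a continuous $[0,1]$-valued function of $\lambda$, hence lies in $C(K)$, which is central in $\hm$. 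One has the bookkeeping identities $\sum_{j}w_{j,k}=1$ (each projection-interval is fully distributed among the $G$-intervals) and $\sum_{k}w_{j,k}=p\,g_j$. Now put $a_j:=\sum_{k=1}^{p} w_{j,k}\,q_k\in\hm^\omega$; since the $q_k$ are orthogonal projections commuting with the central scalars $w_{j,k}$, each $a_j$ is a positive contraction and $a_j^{1/2}=\sum_k w_{j,k}^{1/2}q_k$.

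Most conditions hold exactly in $\hm^\omega$. Property (vii) is immediate from $\sum_j w_{j,k}=1$ and $\sum_k q_k=1_\hm$. Properties (i) and (iv) hold because each $q_k$ commutes with $\hm$ and each $w_{j,k}$ is central, so $[a_j,x]=0=[a_j^{1/2},x]$ for $x\in\hm$. Property (ii) holds because $\beta_\omega^i$ fixes each $q_k$ and fixes $C(K)$ pointwise (as $\sigma=\id_K$), whence $\beta_\omega^i(a_j)=a_j$. For (vi) I will invoke the factorization of the limit trace over the central-sequence subalgebra, namely $\tau_\lambda^\omega(q_k x)=\tau_\lambda^\omega(q_k)\,\tau_\lambda(x)=\tfrac1p\tau_\lambda(x)$ for $x\in\hm$; fiberwise this is the statement that a trace-centred element of $\pi_\lambda(\hm)^\omega\cap\pi_\lambda(\hm)'$ is $L^2$-orthogonal to $\pi_\lambda(\hm)$, which follows from Dixmier averaging in the fiber (a $\mathrm{II}_1$ factor). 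Evaluating gives $\tau_\lambda^\omega(a_j x)=\tfrac1p\big(\sum_k w_{j,k}(\lambda)\big)\tau_\lambda(x)=g_j(\lambda)\tau_\lambda(x)$, so (vi) holds exactly.

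The only place where exactness fails, and the main quantitative point, is the near-orthogonality in (iii) and (v). Since the $q_k$ are orthogonal, $a_ia_j=\sum_k w_{i,k}w_{j,k}\,q_k$ and $a_i^{1/2}a_j^{1/2}=\sum_k w_{i,k}^{1/2}w_{j,k}^{1/2}q_k$, both supported precisely on those indices $k$ for which $[\tfrac{k-1}{p},\tfrac{k}{p}]$ meets two distinct $G$-intervals, i.e.\ contains one of the $N-1$ breakpoints $G_1,\dots,G_{N-1}$; there are at most $2(N-1)$ such indices. Combining this with $\tau_\lambda^\omega(q_k)=\tfrac1p$ and $0\le w\le1$ yields $\|a_ia_j\|_{2,u}^2\le 2(N-1)/p$ and, via the trace identity $\tau_\lambda^\omega(a_j^{1/2}a_ia_j^{1/2})=\tau_\lambda^\omega(a_ia_j)$, also $\|a_i^{1/2}a_j^{1/2}\|_{2,u}^2\le 2(N-1)/p$ for all $i\ne j$. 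Thus choosing $p>2(N-1)/\e^2$ forces (iii) and (v). Finally I descend to $\hm$: lifting $\phi$ to representing positive contractions $(q_k^{(n)})_n$ with $\sum_k q_k^{(n)}=1_\hm$ and setting $a_j^{(n)}:=\sum_k w_{j,k}\,q_k^{(n)}$, all of (i)--(vii) hold to within $\e$ on the prescribed compact set $\Omega$ for $\omega$-many $n$, with (vii) exact by construction; any single such $n$ furnishes the required $a_1,\dots,a_N\in\hm$. The genuine obstacle is the quantitative control of the boundary-overlap terms, which is what forces $p$ large, together with the appeal to the fiberwise trace-factorization used in (vi); the remaining items are formal.
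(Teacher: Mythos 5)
Your construction is essentially the standard one (the paper's proof simply defers to \cite[Lemma 4.4]{Lia16}, which runs the same way): pull the diagonal projections $q_k$ out of the matrix embedding into $(\hm^\omega\cap\hm')^{\beta_\omega}$, weight them by central functions built from the partition of unity, and descend. Your explicit weights $w_{j,k}$, the exact identities $\sum_j w_{j,k}=1$ and $\sum_k w_{j,k}=p\,g_j$, and the counting bound $2(N-1)/p$ for the overlap indices are all correct, and (i), (ii), (iv), (vii) do hold exactly in $\hm^\omega$ as you say. Two points deserve attention. First, your justification of (vi) via Dixmier averaging assumes the fibers $\pi_\lambda(\hm)$ are II$_1$ factors, which is \emph{not} among the hypotheses of this lemma (it is a hypothesis of Theorem \ref{thm:W-Rokhlin}, not here); for a non-factor fiber the factorization $\tau_\lambda^\omega(qx)=\tau_\lambda^\omega(q)\tau_\lambda(x)$ genuinely fails for central projections $q$. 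The repair is easy and stays within the stated hypotheses: with $v_k:=\phi(e_{k1})\in\hm^\omega\cap\hm'$ one has $E^\omega(q_kx)=E^\omega(v_kv_k^*x)=E^\omega(v_k^*xv_k)=E^\omega(v_k^*v_kx)=E^\omega(q_1x)$ by traciality of $E^\omega$ and centrality of $v_k$, and summing over $k$ gives $E^\omega(q_kx)=\tfrac1p E(x)$ exactly; this identity in $C(K^\omega)$ also gives you (vi) uniformly in $\lambda\in K$, which your pointwise statement does not quite address. Second, the final descent is asserted rather than argued: you need representing sequences of \emph{positive contractions} with $\sum_k q_k^{(n)}=1_\hm$ exactly (lift $q_1,\dots,q_{p-1}$ to positive contractions with sum $\le 1$ and set $q_p^{(n)}:=1-\sum_{k<p}q_k^{(n)}$), and you need a finite $\|\cdot\|_{2,u}$-net of the norm-bounded compact set $\Omega$ to convert the exact statements in $\hm^\omega$ into $\e$-statements uniform over $\Omega$. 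Both repairs are routine, so the proof is correct in substance.
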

\begin{proof}
	The proof is identical to \cite[Lemma 4.4]{Lia16}.
\end{proof}

\begin{proof} (of Theorem \ref{thm:W-Rokhlin}) The only difference with the proof of \cite[Theorem 4.1]{Lia16} is that in order to obtain a Rokhlin tower in each fiber, we invoke Ocneanu's Rokhlin type theorem \cite[Theorem 2]{Ocn81} instead of Connes' Rokhlin lemma for automorphisms.
\end{proof}


\section{Approximate Rokhlin Property}

In this section we discuss the notion of ``equivariant $\sigma$-ideals'' for actions of discrete amenable groups. For $\Z$-actions, this was introduced by the author in \cite[Definition 5.5]{Lia16}. Later it was generalized to arbitrary second-countable, locally compact groups by Szab\'o in \cite{Sza17}. Here we review the definition and the fact that the trace kernel ideal is an equivariant $\sigma$-ideal in the ultrapower (see Lemma \ref{lem:alpha-sigma-ideal}). A proof is included mainly for the reader's convience. For a more general discussion see \cite[Section 3]{Sza17}.

The notion of $\sigma$-ideals was first introduced by Kirchberg and R\o rdam in \cite{KR14}. It was designed to capture the behavior of a quasicentral approximate unit. When $B$ is a $C^*$-algebra, $\Gamma$ is a discrete countable amenable group, and $\alpha:\Gamma\to \Aut(B)$ is a group action, the following definition essentially packages an almost invariant quasicentral approximate unit.

\begin{defn} (cf. \cite[Definition 3.1]{Sza17})
Let $B$ be a $C^*$-algebra, $\Gamma$ a discrete countable amenable group, and $\alpha:\Gamma\to \Aut(B)$ a group action. Let $J$ be a closed ideal in $B$ such that $\alpha^g(J) = J$ for every $g\in \Gamma$. We say $J$ is an \textit{$\Gamma$-$\sigma$-ideal} if for every separable $C^*$-subalgebra $C$ of $B$ satisfying $\alpha^g(C)=C$ for every $g\in \Gamma$, there exists a positive contraction $u\in C'\cap J$ such that
\begin{enumerate}
\item[(1)] $\alpha^g(u) = u$ for every $g\in \Gamma$.
\item[(2)] $uc = c$ for every $c\in C\cap J$.
\end{enumerate}
\end{defn}

Let $A$ be a unital separable $C^*$-algebra and let $A_\omega$ be the ultrapower of $A$ (here we fix once and for all a free ultrafilter $\omega$ on $\N$). Suppose that $A$ has nonemepty trace space $T(A)$. Given an element $a$ in $A_\omega$ represented by a sequence $(a_1,a_2,...)$, recall from \cite{KR14} that the seminorm $\|\cdot\|_{2,\omega}$ on $A_\omega$ is defined by
$$
\|a\|_{2,\omega} := \lim_{n\to \omega} \sup_{\tau\in T(A)} \tau(a_n^*a_n)^{1/2}.
$$
The \emph{trace kernel ideal} $J_A$ is defined to be
$$
J_A := \{ e\in A_\omega : \|e\|_{2,\omega} = 0 \}.
$$

\begin{lem} \label{lem:alpha-sigma-ideal}
Let $A$ be a unital separable $C^*$-algebra with $T(A)\neq \emptyset$, $\Gamma$ a discrete countable amenable group, and $\alpha:\Gamma\to \Aut(A)$ a group action. Then the trace-kernel ideal $J_A$ is an $\Gamma$-$\sigma$-ideal in $A_\omega$.
\end{lem}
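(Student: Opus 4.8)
The plan is to reduce to the non-equivariant $\sigma$-ideal property of the trace-kernel ideal established by Kirchberg and R\o rdam, then to build in invariance by averaging over a F\o lner sequence, using amenability of $\Gamma$ together with a reindexing argument to pass from approximate to exact invariance.

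First I would record two preliminaries. The uniform $2$-seminorm is invariant under the induced automorphisms: for $g\in\Gamma$ and $a\in A_\omega$ represented by $(a_n)$, each $\tau\circ\alpha^g$ is again a trace and $\tau\mapsto\tau\circ\alpha^g$ is a bijection of $T(A)$, so $\|\alpha_\omega^g(a)\|_{2,\omega}=\|a\|_{2,\omega}$; hence $\alpha_\omega^g(J_A)=J_A$ for all $g$. Second, $J_A$ is a $\sigma$-ideal in $A_\omega$ in the ordinary (non-equivariant) sense \cite{KR14}; applying this to the separable, $\alpha_\omega$-invariant subalgebra $C$ produces a positive contraction $e\in C'\cap J_A$ with $ec=c$ for all $c\in C\cap J_A$.

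The next step is the averaging. Since $C$ and $J_A$ are $\alpha_\omega$-invariant, each translate $\alpha_\omega^g(e)$ again lies in $C'\cap J_A$, and for $c\in C\cap J_A$ one has $\alpha_\omega^{g^{-1}}(c)\in C\cap J_A$, whence $\alpha_\omega^g(e)\,c=\alpha_\omega^g\!\big(e\,\alpha_\omega^{g^{-1}}(c)\big)=\alpha_\omega^g\!\big(\alpha_\omega^{g^{-1}}(c)\big)=c$. Thus for any finite $F\subseteq\Gamma$ the convex combination $u_F:=\frac{1}{|F|}\sum_{g\in F}\alpha_\omega^g(e)$ is still a positive contraction in $C'\cap J_A$ acting as a unit on $C\cap J_A$. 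If $F$ is a sufficiently invariant F\o lner set, then $\alpha_\omega^h(u_F)=u_{hF}$ differs from $u_F$ only in the terms indexed by $hF\triangle F$, so $\|\alpha_\omega^h(u_F)-u_F\|\le |hF\triangle F|/|F|$, which can be made smaller than any prescribed $\e>0$ simultaneously for all $h$ in any prescribed finite subset of $\Gamma$.

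The delicate point is upgrading this approximate invariance, valid for finitely many group elements at a time, to exact invariance for every $g\in\Gamma$ at once; amenability enters precisely through the F\o lner averaging that makes the approximation possible. Here I would invoke the ultrapower structure. Using countability of $\Gamma$, enumerate it as $\{g_1,g_2,\dots\}$, fix norm-dense sequences in $C$ and in $C\cap J_A$, and view candidates as sequences of positive contractions in $A$. Each of the requirements ``$u\in C'$'', ``$u\in J_A$'', ``$uc=c$ on $C\cap J_A$'', and ``$\alpha_\omega^{g_k}(u)=u$'' is encoded as the vanishing of an ultralimit functional. The averaging step shows that every finite family of these can be met up to arbitrary $\e>0$ by some $u_F$, so Kirchberg's $\e$-test (a standard reindexing argument in the ultrapower; cf. \cite{KR14}) yields a single positive contraction $u\in A_\omega$ on which all of them vanish exactly. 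This $u$ is the required $\alpha_\omega$-invariant positive contraction in $C'\cap J_A$ satisfying $uc=c$ for all $c\in C\cap J_A$, which is precisely the assertion that $J_A$ is an $\alpha_\omega$-$\sigma$-ideal.
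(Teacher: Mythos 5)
Your proposal is correct and follows essentially the same route as the paper's proof: invoke the non-equivariant $\sigma$-ideal property of $J_A$ from Kirchberg--R\o rdam to get $e\in C'\cap J_A$ acting as a unit on $C\cap J_A$, average $e$ over F\o lner sets to obtain approximate invariance with the same $|hF\triangle F|/|F|$ bound, and then upgrade to exact invariance via Kirchberg's $\e$-test. The only cosmetic difference is that you spell out the verification that each translate $\alpha_\omega^g(e)$ still lies in $C'\cap J_A$ and acts as a unit on $C\cap J_A$, which the paper leaves as ``one checks.''
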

\begin{proof}
First note that $(\alpha_\omega)^g(J_A) = J_A$ for every $g\in \Gamma$ since $(\alpha_\omega)^g$ preserves the uniform 2-norm. By \cite[Proposition 4.6]{KR14} $J_A$ is a $\sigma$-ideal in $A_\omega$, so there exists a positive contraction $e\in C'\cap J_A$ such that $ec = c$ for every $c\in C\cap J_A$. Let $F_1\subseteq F_2\subseteq \cdots \subseteq \Gamma$ be an increasing sequence of finite subsets whose union is $\Gamma$. By amenability for each $n\in \N$ we can find a $(F_n, 1/n)$-invariant (finite) subset $K_n$ of $\Gamma$, in the sense that
$$
\max_{s\in F_n} \frac{ \left|sK_n\triangle K_n\right|}{|K_n|} < \frac{1}{n}.
$$
For each $n\in \N$, define 
$$
e^{(n)} := \frac{1}{|K_n|}\sum_{g\in K_n} (\alpha_\omega)^g(e).
$$
One checks that $[e^{(n)}, c] = 0$ for every $c\in C$ and $e^{(n)}c = c$ for every $c\in C\cap J$. Moreover, for each $s\in F_n$ we have
\begin{align*}
\| (\alpha_\omega)^s(e^{(n)}) - e^{(n)}\| &= \frac{1}{|K_n|}\left\| \sum_{g\in K_n}(\alpha_\omega)^{sg}(e) - \sum_{g\in K_n}(\alpha_\omega)^g(e) \right\| \\
&\leq \frac{1}{|K_n|}|sK_n\triangle K_n| < \frac{1}{n}.
\end{align*}

To finish the proof, we invoke Kirchberg's $\e$-test \cite[Lemma 3.1]{KR14}. Let $d = (d_1,d_2,...)$ be a strictly positive contraction in the separable $C^*$-algebra $C\cap J_A$, and $\{g_k\}_{k=1}^\infty$ be a list of elements in $\Gamma$. Take a dense sequence $\{c^{(k)}\}_{k=1}^\infty$ in the unit ball of $C$, and represent each $c^{(k)}$ by $(c_1^{(k)},c_2^{(k)},...)$. Let each $X_n$ be the set of positive contractions in $A$, and define functions $f_n^{(k)}:X_n\to [0,\infty)$ by
\begin{align*}
f_n^{(1)}(x) &= \| (1-x)d_n\|,\\
f_n^{(2)}(x) &= \|x\|_{2,u},\\
f_n^{(2k+1)} &= \|\alpha^{g_k}(x)-x\|\;\;\;\;\;\; (k\in \N),\\
f_n^{(2k+2)} &= \|c_n^{(k)}x-xc_n^{(k)}\|\;\;\;\;\;\;\;(k\in \N).
\end{align*}
Given $m\in \N$ and $\e>0$, there exists $\ell\in \N$ sufficiently large (along the filter $\omega$) so that
$$
e^{(\ell)} = (e_1^{(\ell)},e_2^{(\ell)},...)
$$
in $A_\omega$ satisfies
$$
f_\omega^{(k)}(e_1^{(\ell)},e_2^{(\ell)},...) < \e\;\;\;\;\;\; (1\leq k\leq m).
$$
Now the proof is finished by applying the $\e$-test.
\end{proof}

As in \cite{Lia16}, we define a technical property called the \emph{approximate Rokhlin property}, which would serve as a bridge connecting the $W^*$-Rokhlin property and the Rokhlin dimension. Recall that the relative commutant $A_\omega \cap A'$ is called the \emph{central sequence algebra} of $A$. Following Kirchberg \cite{Kir06}, we will denote the central sequence algebra of $A$ by $F(A)$.

\begin{defn} Let $A$ be a unital $C^*$-algebra and $\alpha:\Z^m\to \Aut(A)$ be a group action. We say $\alpha$ has the \emph{approximate Rokhlin property} if for every $p\in \N$ there exist positive contractions $\{f_{v}\}_{v\in B_p}$ in $F(A)$ such that
	\begin{itemize}
		\item[(1)] $(\alpha_\omega)^u(f_v) = f_{(v+u \mod p\Z^m)} \;\;\;\;\;\; (v\in B_p,\;u\in \Z^m)$.
		\item[(2)] $f_vf_u = 0\;\;\;\;\;\; (v,u\in B_p,\; v\neq u)$.
		\item[(3)] $1_A - \sum_{v\in B_p}f_v$ belongs to $J_A\cap F(A)$.
		\item[(4)] $\sup_{k\in \N} \| 1_A - f_0^k \|_{2,\omega} < 1$.
	\end{itemize}
(here 0 is the origin in $\Z^m$).
\end{defn}

With the aid of Lemma \ref{lem:alpha-sigma-ideal}, it is not hard to pass from the $W^*$-Rokhlin property to the approximate Rokhlin property.

\begin{lem} \label{lem:W-Rok-to-Approx-Rok} (cf. \cite[Propoition 5.8]{Lia16})
Let $A$ be a unital $C^*$-algebra and $\alpha:\Z^m\to \Aut(A)$ be a group action. Suppose the induced action $(\tilde{\alpha}, \bar{\alpha}):\Z^m\to \Aut(\overline{A}^u,\partial_e T(A),E)$ has the $W^*$-Rokhlin property. Then $\alpha$ has approximate Rokhlin property.
\end{lem}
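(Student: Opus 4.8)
The plan is to lift the projections $\{p_v\}_{v\in B_n}$ furnished by the $W^*$-Rokhlin property of $(\tilde\alpha,\bar\alpha)$ on $\overline{A}^u$ back to the $C^*$-level central sequence algebra $F(A)$, and then to symmetrize/correct them using the $\alpha_\omega$-$\sigma$-ideal structure of the trace-kernel ideal $J_A$ established in Lemma \ref{lem:alpha-sigma-ideal}. First I would recall the canonical identification that relates the uniform tracial completion $\overline{A}^u$ and its central sequence bundle to the $C^*$-algebraic objects $A_\omega$, $J_A$, and $F(A) = A_\omega \cap A'$: the quotient $A_\omega/J_A$ (equivalently $A_\omega$ equipped with $\|\cdot\|_{2,\omega}$) maps onto the ultrapower $\overline{A}^u{}^\omega$ of the bundle, carrying $F(A)$ into the central sequence bundle $\hm^\omega\cap\hm'$ and intertwining $\alpha_\omega$ with $\beta_\omega = \tilde\alpha_\omega$. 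This is the bridge recorded in \cite{Lia16} and \cite{BBSTWW15}, and I would cite it rather than reprove it.

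Given a fixed $p\in\N$, I would start from the projections $\{p_v\}_{v\in B_p}$ in $\hm^\omega\cap\hm'$ supplied by Theorem \ref{thm:W-Rokhlin} (hypotheses (1)--(4) of that theorem being exactly what the $W^*$-Rokhlin property of $(\tilde\alpha,\bar\alpha)$ delivers). These satisfy $(\beta_\omega)^u(p_v)=p_{v+u}\bmod p$, $\sum_{v\in B_p}p_v=1$, and $\|1-p_0\|_{2,u}^{(\omega)}<1$. The next step is to choose positive contraction preimages $\{\hat f_v\}_{v\in B_p}$ in $F(A)\subseteq A_\omega$ mapping onto the $p_v$; by a standard projectivity/functional-calculus argument at the level of representing sequences one arranges these lifts to be genuine positive contractions in $F(A)$. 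Under this lift, properties (1), (2), and (4) of the approximate Rokhlin property hold \emph{modulo} $J_A$ and in $\|\cdot\|_{2,\omega}$: equivariance $(\alpha_\omega)^u(\hat f_v)=\hat f_{v+u}\bmod p$ and orthogonality $\hat f_v\hat f_u=0$ hold up to elements of $J_A$, while condition (4), being a $\|\cdot\|_{2,\omega}$-estimate, transfers directly from $\|1-p_0\|_{2,u}^{(\omega)}<1$.

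The main obstacle, and the place where Lemma \ref{lem:alpha-sigma-ideal} does the real work, is upgrading these ``modulo $J_A$'' relations to the \emph{exact} algebraic identities demanded by the definition (exact orthogonality $f_vf_u=0$, exact equivariance, and $1-\sum f_v\in J_A\cap F(A)$) while keeping the $f_v$ in $F(A)$. The strategy is to apply the $\alpha_\omega$-$\sigma$-ideal property to the separable $\alpha_\omega$-invariant subalgebra $C$ generated by the lifts $\{\hat f_v\}$ together with a countable dense piece of $A$, obtaining an $\alpha_\omega$-invariant positive contraction $u\in C'\cap J_A$ acting as a unit on $C\cap J_A$. Multiplying the lifts by $(1-u)$ (or compressing by it) absorbs the $J_A$-error terms: the defects in orthogonality and equivariance, being elements of $C\cap J_A$, are annihilated after cutting down, and because $u$ is $\alpha_\omega$-fixed and central the correction preserves both the equivariance relation and membership in $F(A)$. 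Here I would follow the bookkeeping of \cite[Proposition 5.8]{Lia16} closely, the only change being that the single automorphism is replaced by the $\Z^m$-action and the cyclic-permutation index set $B_p$ replaces the interval used there; the amenability input needed to make $u$ simultaneously invariant and absorbing is precisely what Lemma \ref{lem:alpha-sigma-ideal} provides for $\Gamma=\Z^m$. I expect the correction of the partition-of-unity condition (3) — arranging $1-\sum_{v}f_v\in J_A\cap F(A)$ after the cut-downs have perturbed the sum — to be the most delicate bit of the error-absorption, but it is controlled because all discrepancies introduced lie in $J_A$ by construction.
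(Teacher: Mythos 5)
Your proposal matches the paper's proof essentially step for step: lift the $W^*$-Rokhlin projections through the surjection $F(A)\to \hm^\omega\cap\hm'$ to positive contractions whose Rokhlin relations hold modulo $J_A$, then apply the $\alpha_\omega$-$\sigma$-ideal property of $J_A$ (Lemma \ref{lem:alpha-sigma-ideal}) to an invariant separable subalgebra containing $A$ and all translates of the lifts, and compress by $1-u$ to make the relations exact. The approach and the key inputs (surjectivity from \cite{KR14}, the $\sigma$-ideal correction, invariance of $u$) are exactly those of the paper.
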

\begin{proof}
To simplify the notion, we write $(\hm,K, E)$ for the triple $(\overline{A}^u, \partial_e T(A), E)$. Let $\{p_v\}_{v\in B_p}$ be projections in $\hm^\omega\cap \hm'$ satisfying the $W^*$-Rokhlin property. Recall that the canonical map $F(A)\to \hm^\omega\cap \hm'$ is surjective (\cite{KR14}). Lifting the projections $\{p_v\}$ along this surjection, we obtain positive contractions $\{f_v'\}$ in $F(A)$ such that
\begin{itemize}
\item[(1)] $(\alpha_\omega)^u(f_v') - f'_{(v+u \mod p\Z^m)}\in J_A\cap F(A) \;\;\;\;\;\;\; (v\in B_p,\; u\in \Z^m)$,
\item[(2)] $f'_vf'_u\in J_A\cap F(A)\;\;\;\;\;\; (u,v\in B_p,\; u\neq v)$, 
\item[(3)] $1_A - \sum_v f'_v\in J_A\cap F(A)$.
\end{itemize}
Put $C := C^*( A, \{ (\alpha_\omega)^u(f_v'): u\in \Z^m,\; v\in B_p \} )$. Then $C$ is a separable $C^*$-subalgebra of $A_\omega$ satisfying $(\alpha_\omega)^u(C) =C$ for every $u\in \Z^m$. Since $J_A$ is an $\Z^m$-$\sigma$-ideal, there exists a positive contraction $u\in C'\cap J_A$ such that $(\alpha_\omega)^w(u) = u$ for all $w\in \Z^m$ and $uc = c$ for all $c\in C\cap J_A$. Define 
$$
f_v := (1-u)f_v'(1-u)\;\;\;\;\;\; (v\in B_{p}).
$$
One checks that the family $\{f_v\}_{v\in B_p}$ has the desired properties.
\end{proof}

\begin{rem}
	Here we use $F(A)$ mainly as a shorthand notation for the central sequence $A_\omega\cap A'$. In the literature $F(A)$ was also defined for non-unital $C^*$-algebra (see \cite{Kir06}). One may ask whether a version of Lemma \ref{lem:W-Rok-to-Approx-Rok} also holds in the non-unital case. Note that this requires generalizing the results on surjectivity of $A_\omega\cap A'\to \mathcal{M}^\omega \cap \mathcal{M}'$ and on the trace kernel ideals (see \cite{KR14}).
\end{rem}




\section{A Rokhlin type theorem}

In this section we establish our Rokhlin type theorem for $\Z^m$-actions on $C^*$-algebras (Theorem \ref{thm:main}). Let $\alpha:\Z^m\to \Aut(A)$ be a group action. The key observation, motivated by \cite{Nak99}, is that suppose for every canonical generator $\alpha^i$, we can find Rokhlin towers for $\alpha^i$ which are almost fixed by all the other generators $\alpha^1,...,\alpha^{i-1},\alpha^{i+1},...,\alpha^m$, then we can simply ``multiply'' the towers together and get Rokhlin towers for $\alpha$. Here is the precise statement:

\begin{prop} \label{prop:Zm-Rokhlin} Let $A$ be a unital $C^*$-algebra and $\alpha:\Z^m\to \Aut(A)$ be a group action. Suppose there exists $d\in \N$ such that for each $i\in \{1,...,m\}$ the following holds: for every $\e > 0$, $p\in \N$, finite subset $F\subseteq A$, there exist positive contractions
	$$
	\{g_{i,k}^{(\ell)}: 0\leq k\leq p-1, \;0\leq \ell\leq d\}
	$$
	in $A$ such that
	\begin{itemize}
		\item[(1)] $\|\alpha^i(g_{i,k}^{(\ell)}) - g_{i,(k+1 \mod p)}^{(\ell)} \| < \e \;\;\;\;\;\; (0\leq k\leq p-1,\; 0\leq \ell\leq d)$,
		\item[(2)] $\| \alpha^j(g_{i,k}^{(\ell)}) - g_{i,k}^{(\ell)} \| < \e\;\;\;\;\;\; (1\leq j\leq m, \; j\neq i,\; 0\leq k\leq p-1,\; 0\leq \ell\leq d)$,
		\item[(3)] $\| [ g_{i,k}^{(\ell)}, x] \| < \e\;\;\;\;\;\; (0\leq k\leq p-1,\; 0\leq \ell\leq d,\; x\in F)$,
		\item[(4)] $\| g_{i,k}^{(\ell)} g_{i,n}^{(\ell)} \| < \e\;\;\;\;\;\; (0\leq k\neq n\leq p-1,\; 0\leq \ell\leq d)$,
		\item[(5)] $\left\| \sum_{k,\ell}g_{i,k}^{(\ell)} - 1_A \right\| < \e$.
	\end{itemize}
	Then $\rdim(\alpha) \leq (d+1)^m-1$.
\end{prop}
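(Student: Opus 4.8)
The plan is to index the $(d+1)^m$ colours demanded by Definition \ref{defn:Zm-Rok} by the tuples $\vec\ell=(\ell_1,\dots,\ell_m)\in\{0,\dots,d\}^m$, and to assemble the Rokhlin elements for $\alpha$ over a box $B_n$ as products $f_v^{(\vec\ell)}\approx g_{1,v_1}^{(\ell_1)}g_{2,v_2}^{(\ell_2)}\cdots g_{m,v_m}^{(\ell_m)}$ of the one-parameter towers furnished by the hypothesis in each coordinate direction, taken with height $p=n$. The guiding heuristic is exactly the one stated before the proposition: $\alpha^{\xi_j}$ shifts the $j$th factor by one and leaves every other factor (approximately) fixed, so it moves $f_v^{(\vec\ell)}$ to $f_{v+\xi_j}^{(\vec\ell)}$. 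I would organise the whole argument inside the central sequence algebra $F(A)=A_\omega\cap A'$, where the approximate relations of Definition \ref{defn:Zm-Rok} and of the hypothesis all become \emph{exact}; this is the standard reindexing equivalence, and its crucial feature here is that exactness removes the accumulation of error that would otherwise occur when the generator relation in condition (1) is iterated to cover an arbitrary $w\in\Z^m$.

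The first substantive step is to realise the $m$ families so that they commute with one another. I would build them one direction at a time: once the families for directions $1,\dots,i-1$ have been placed in $F(A)$, they generate a separable $\alpha_\omega$-invariant subalgebra $S_{i-1}$, and I would apply the hypothesis for direction $i$ inside the relative commutant $F(A)\cap S_{i-1}'$, obtaining positive contractions $(g_{i,k}^{(\ell)})$ there that satisfy the exact shift, invariance, orthogonality and partition-of-unity relations. This is the sole place the near-centrality clause of the hypothesis is used: it is precisely what allows the new family to be pushed into the commutant of the previously constructed (separable) set, via reindexing. After $m$ steps the families $(g_{i,k}^{(\ell)})_{i=1}^{m}$ pairwise commute in $F(A)$.

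With commuting families in hand, write $a_i:=g_{i,v_i}^{(\ell_i)}$ for the factors attached to a fixed pair $(v,\vec\ell)$ and set
$$
f_v^{(\vec\ell)}:=a_1^{1/2}\cdots a_{m-1}^{1/2}\,a_m\,a_{m-1}^{1/2}\cdots a_1^{1/2},
$$
a manifest positive contraction of the form $b^{*}a_m b$ (with $b=a_{m-1}^{1/2}\cdots a_1^{1/2}$) which, as the $a_i$ commute, equals the ordered product $a_1a_2\cdots a_m$. The three required relations are then combinatorial. The partition of unity follows from the distributive law
$$
\prod_{i=1}^m\Big(\sum_{k,\ell}g_{i,k}^{(\ell)}\Big)=\sum_{\vec\ell}\sum_{v}\prod_{i}g_{i,v_i}^{(\ell_i)}
$$
together with $\sum_{k,\ell}g_{i,k}^{(\ell)}=1$ in each slot. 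For orthogonality within a fixed colour $\vec\ell$, any $v\ne v'$ differ in some coordinate $i$, and $g_{i,v_i}^{(\ell_i)}g_{i,v'_i}^{(\ell_i)}=0$ annihilates the product once those two factors are commuted together. For equivariance it suffices by composition to check the generators: $\alpha_\omega^{\xi_j}$ sends $g_{j,v_j}^{(\ell_j)}$ to $g_{j,v_j+1}^{(\ell_j)}$ and fixes the remaining factors, so $\alpha_\omega^{\xi_j}(f_v^{(\vec\ell)})=f_{v+\xi_j}^{(\vec\ell)}$ (mod $n$), and exactness in $F(A)$ propagates this to every $w\in\Z^m$ with no error. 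These $(d+1)^m$ colours witness $\rdim(\alpha)\le(d+1)^m-1$.

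The step I expect to be the main obstacle is producing the $m$ families \emph{simultaneously commuting}: the hypothesis yields, for each direction in isolation, towers that are only central relative to a prescribed finite subset of $A$, and converting this into genuine commutation among all $m$ families is what forces the successive, relative-commutant construction (equivalently, in a purely finitary rendering, a delicate nesting of finite sets $F\subseteq F^{(1)}\subseteq\cdots\subseteq F^{(m)}$ and tolerances so that each new family nearly commutes with all earlier tower elements). By comparison, turning the ordered product into an honest positive contraction is routine, handled by the conjugation formula above or, finitarily, by the estimate that $\|[g^{1/2},x]\|$ is small whenever $\|[g,x]\|$ is.
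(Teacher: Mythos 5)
Your proposal is correct and follows essentially the same route as the paper: the families are produced one direction at a time so that each new one (nearly) commutes with its predecessors --- the paper does this finitarily by nesting the finite sets $F=F_1\subseteq F_2\subseteq\cdots$ and shrinking $\e'$, which is exactly the "finitary rendering" you describe, while you phrase it as an exact relative-commutant construction in $F(A)$ --- and the Rokhlin elements are then the same symmetrized products $b^*ab$ with one factor to the first power and the rest as square-root conjugators. The only differences are cosmetic (which coordinate sits in the middle of the sandwich, and exact versus $\e'$-approximate bookkeeping).
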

\begin{proof}
	Fix $\e > 0$, a finite subset $F\subseteq A$, and $p\in \N$. Let 
	$$
	\{g_{1,k}^{(\ell)}: 0\leq k\leq p-1,\; 0\leq \ell\leq d \}
	$$
	be positive contractions in $A$ satisfying (1) to (5) with respect to $\e' > 0$ and $F_1 := F$, where $\e'$ is a small number to be determined later.
	
	Define, for $i = 1,2,...,m-1$, inductively $F_{i+1} := F_i \cup \{g_{i,k}^{(\ell)} \}$ and let $\{ g_{i+1,k}^{(\ell)}:0\leq k\leq p-1,\; 0\leq \ell \leq d \}$ be positive contractions satisfying conditions (1) to (5) with respect to $\e'$ and $F_i$. For $v = (v_1,...,v_m)\in B_p$ and $(\ell_1,...,\ell_m)\in \{0,...,d \}^m$, define (positive contractions)
	$$
	f_v^{(\ell_1,...,\ell_m)} := \left( g_{m,v_m}^{(\ell_m)}\right)^{\frac{1}{2}}\cdots \left( g_{2,v_2}^{(\ell_2)}\right)^{\frac{1}{2}} \left( g_{1,v_1}^{(\ell_1)} \right) \left( g_{2,v_2}^{(\ell_2)}\right)^{\frac{1}{2}}\cdots \left( g_{m,v_m}^{(\ell_m)}\right)^{\frac{1}{2}}.
	$$
	One checks that these elements form $(d+1)^m$ Rokhlin towers for $\alpha$ with respect to $\e$ and $F$, provided that $\e'$ is sufficiently small.
\end{proof}

For the rest of the section we will focus on establishing the conditions described in Proposition \ref{prop:Zm-Rokhlin}. One technical notion we need is the so-called \emph{Property (SI)}; it was first introduced by Sato in \cite{Sat10}, and later reformulated by Kirchberg and R\o rdam in \cite{KR14}.

\begin{defn} (\cite[Definition 2.6]{KR14}) \label{defn:properptySI}  Let $A$ be a unital separable $C^*$-algebra with nonempty trace space. We say $A$ has \emph{property (SI)} if for all positive contractions $e,f\in F(A)$ with $e\in J_A$ and $\sup_{m\in \N}\|1_A-f^m\|_{2,\omega} < 1$, there exists an element $s\in F(A)$ with $fs = s$ and $s^*s = e$.
\end{defn}

\begin{lem} Let $A$ be a unital separable $C^*$-algebra with $T(A)\neq \emptyset$, and $\alpha:\Z^m\to \Aut(A)$ be a group action. Suppose $\alpha$ has the approximate Rokhlin property and $A$ has property (SI). Then for every $i\in \{1,...,m\}$ and $p\in \N$ there exist $g_0,...,g_{p-1},v$ in $F(A)$ such that
\begin{enumerate}
\item[(i)] $g_0,...,g_{p-1}$ are positive contractions,
\item[(ii)] $g_kg_n = 0\;\;\;\;\;\; (0\leq k\neq n \leq p-1)$,
\item[(iii)] $(\alpha_\omega)^i (g_k) = g_{(k+1, \mod p)} \;\;\;\;\;\; (0\leq k\leq p-1)$,
\item[(iv)] $g_0v = v$,
\item[(v)] $\sum_{k=0}^{p-1}g_k +v^*v = 1_A$,
\item[(vi)] $(\alpha_\omega)^{pi}(v) = v$,
\item[(vii)] $(\alpha_\omega)^j(g_k) = g_k\;\;\;\;\;\; (1\leq j\neq i\leq m)$,
\item[(viii)] $(\alpha_\omega)^j(v) = v\;\;\;\;\;\; (1\leq j\neq i\leq m)$.
\end{enumerate}
\end{lem}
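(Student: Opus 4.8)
The plan is to collapse the $(m{-}1)$-dimensional box tower supplied by the approximate Rokhlin property onto the $i$-th axis, and then to absorb the resulting defect into $v$ by an equivariant form of property (SI). First I would invoke the approximate Rokhlin property at stage $p$ to obtain positive contractions $\{f_w\}_{w\in B_p}\subseteq F(A)$ with $(\alpha_\omega)^u(f_w)=f_{w+u}\bmod p$, pairwise orthogonal, summing to $1_A$ modulo $J_A\cap F(A)$, and with $\sup_k\|1_A-f_0^k\|_{2,\omega}<1$. Setting $g_k:=\sum_{w\in B_p,\,w_i=k}f_w$ for $0\le k\le p-1$ gives positive contractions (orthogonal sums of orthogonal contractions), which yields (i) and (ii). Because $(\alpha_\omega)^{\xi_i}$ shifts the $i$-th coordinate while $(\alpha_\omega)^{\xi_j}$ ($j\neq i$) merely permutes the summands defining each $g_k$ without changing $w_i$, one checks $(\alpha_\omega)^i(g_k)=g_{k+1}\bmod p$ and $(\alpha_\omega)^j(g_k)=g_k$, giving (iii) and (vii). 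Writing $e:=1_A-\sum_k g_k=1_A-\sum_w f_w$, property (3) of the approximate Rokhlin property forces $e\in J_A\cap F(A)$, and both $e$ and $g_0$ are invariant under the subgroup $H\le\Z^m$ generated by $p\xi_i$ and $\{\xi_j:j\neq i\}$ (indeed $\sum_w f_w$ is invariant under all of $\Z^m$).

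Next I would verify that $g_0$ satisfies the norm hypothesis of property (SI). Since the summands of $g_0$ are pairwise orthogonal, $g_0^k=\sum_{w_i=0}f_w^k\ge f_0^k$, so $0\le 1_A-g_0^k\le 1_A-f_0^k$. Using that $0\le a\le b$ implies $\|a\|_{2,\omega}\le\|b\|_{2,\omega}$ (because $\tau(b^2)-\tau(a^2)=\tau((b-a)(b+a))\ge 0$ for every $\tau\in T(A)$, the trace of a product of positive elements being nonnegative), I conclude $\sup_k\|1_A-g_0^k\|_{2,\omega}\le\sup_k\|1_A-f_0^k\|_{2,\omega}<1$.

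The remaining and decisive task is to produce an $H$-invariant $v\in F(A)$ with $g_0v=v$ and $v^*v=e$, which then yields (iv), (v), (vi), and (viii) simultaneously: (v) holds because $\sum_k g_k+v^*v=(1_A-e)+e=1_A$, while (vi) and (viii) are exactly $H$-invariance of $v$. Ordinary property (SI) (Definition \ref{defn:properptySI}) applied to the pair $(e,g_0)$ produces some $s_0\in F(A)$ with $g_0s_0=s_0$ and $s_0^*s_0=e$, but $s_0$ carries no invariance, and this is where the main difficulty lies. Since $e$ and $g_0$ are $H$-invariant, the group $H$ permutes the solution set $\{s: g_0 s=s,\ s^*s=e\}$, so the task is to locate a fixed point, and the obstruction to doing so is governed by a unitary $1$-cocycle for the $H$-action on the hereditary subalgebra cut out by $g_0$.

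The plan for this equivariant refinement is to trivialize that cocycle approximately using amenability of $H$: average $s_0$ along a F\o lner sequence for $H$ and correct the resulting error terms by the invariant quasicentral positive contraction furnished by the $\alpha_\omega$-$\sigma$-ideal structure of $J_A$ (Lemma \ref{lem:alpha-sigma-ideal}), then promote the approximately invariant, approximate solution to an exact one by a reindexing argument via Kirchberg's $\e$-test (\cite[Lemma 3.1]{KR14}). This is the analogue, for the amenable group $H$ in place of $\Z$, of the corresponding step in \cite{Lia16}, and it is the main obstacle; by contrast, the combinatorial reduction to the one-dimensional tower and the norm estimate above are routine.
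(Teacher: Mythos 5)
Your reduction to the $i$-th axis is the same as the paper's (slice the box tower along $\{w : w_i = k\}$ to get the $g_k$'s), and your monotonicity estimate $\sup_k\|1_A-g_0^k\|_{2,\omega}<1$ is correct. But there is a genuine gap exactly at the step you flag as the main difficulty: the construction of the invariant $v$. Averaging $s_0$ over a F\o lner set $K\subseteq H$ with the normalization $\tfrac{1}{|K|}$ destroys the identity $s^*s=e$, because the cross terms $\alpha_\omega^h(s_0)^*\alpha_\omega^{h'}(s_0)$ for $h\neq h'$ are completely uncontrolled: all the translates $\alpha_\omega^h(s_0)$ satisfy $g_0\,\alpha_\omega^h(s_0)=\alpha_\omega^h(s_0)$ with the \emph{same} ($H$-invariant) left support $g_0$, so they are in no sense orthogonal, and no quasicentral correction from Lemma \ref{lem:alpha-sigma-ideal} repairs a loss of magnitude in $s^*s$ (that ideal only kills elements that are already $\|\cdot\|_{2,\omega}$-null). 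The ``unitary $1$-cocycle'' you invoke would have to be shown to be an approximate coboundary, which is a nontrivial vanishing statement you neither reduce to a citation nor prove.

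The paper avoids this entirely by applying property (SI) not to the pair $(e,g_0)$ but to $(e,f_0)$, where $f_0$ is the single corner of a \emph{taller} tower indexed by $B_{\ell p}$ (with $\ell$ chosen so that $2/\sqrt{\ell p}<\e$). The resulting $w$ satisfies $f_0w=w$, and since $f_uf_{u'}=0$ for $u\neq u'$, the translates $\alpha_\omega^u(w)$ over the slice $\{u\in B_{\ell p}: u_i=0\}$ are mutually orthogonal in the sense that $\alpha_\omega^u(w)^*\alpha_\omega^{u'}(w)=\alpha_\omega^u(w)^*f_uf_{u'}\alpha_\omega^{u'}(w)=0$. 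One may therefore sum them with the normalization $(\ell p)^{-(m-1)/2}$: the square-root normalization preserves $s^*s=e$ exactly, while a shift by $\xi_j$ ($j\neq i$) only moves two boundary hyperplanes of the slice, each of operator norm at most $\sqrt{(\ell p)^{m-2}}$ by the $C^*$-identity, giving $\|\alpha_\omega^j(s)-s\|\le 2/\sqrt{\ell p}<\e$; the same trick applied to the translates $\alpha_\omega^{(rp)\xi_i}(s)$, $0\le r\le\ell-1$, yields (vi). Exact invariance is then recovered by the standard ultrapower reindexing. So the missing idea is to arrange \emph{orthogonality of the translates before averaging} (by supporting $w$ under the corner $f_0$ rather than under the slice $g_0$) and to use a $1/\sqrt{N}$ rather than $1/N$ normalization; without that, the averaging scheme you propose does not close.
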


\begin{proof} The main idea is almost contained in \cite[Proposition 4.5]{MS14}. However, since the assumptions are slightly different, we include full details for the reader's convenience. Fix $i\in \{1,...,m\}$ and $p\in \N$. First of all, since we are working with ultraproducts, it suffices to find elements satisfying (i) to (viii) up to an arbitrarily small $\e > 0$. Let $\ell$ be a positive integer such that $\frac{2}{\sqrt{\ell p}} < \e$. By the approximate Rokhlin property, there exist $\{f_v\}_{ v\in B_{\ell p} }$ satisfying
	\begin{enumerate}
		\item[(1)] $(\alpha_\omega)^u(f_v) = f_{(v+u \mod (\ell p) \Z^m)} \;\;\;\;\;\;\; (v\in B_{\ell p},\; u\in \Z^m)$,
		\item[(2)] $f_vf_u = 0 \;\;\;\;\;\; (v,u\in B_{\ell p},\; v\neq u)$,
		\item[(3)] $e : = 1_A - \sum_{v\in B_{\ell p}}f_v \in J_A\cap F(A)$,
		\item[(4)] $\sup_{n}\| 1_A - f_0^n \|_{2,\omega} < 1$.
	\end{enumerate}	
Since $A$ has property (SI), there is $w$ in $F(A)$ such that
$$
w^*w = e,\;\;\;\;\;\; \text{and}\;\;\;\;\;\;\; f_0w = w.
$$	
In what follows, the coordinates of a point $v\in \Z^m$ will be denoted $v_1,v_2,...,v_m$. Define, for $0\leq k\leq \ell p -1$, 
$$
g_k':= \sum_{\{v\in B_{\ell p}: v_i = k \}  } f_v
$$
and
$$
s := \frac{1}{ \sqrt{ (\ell p)^{m-1}}   } \sum_{\{u\in B_{\ell p}: u_i = 0\} } (\alpha_\omega)^u(w).
$$
Then clearly we have
\begin{itemize}
	\item $(\alpha_\omega)^j(g_k') = g_k'$ \;\;\;\;\;\;\; ($0\leq k\leq \ell p - 1,\; 1\leq j\neq i\leq m$);
	\item $(\alpha_\omega)^i(g_k') = g_{(k+1 \mod (\ell p))}'; $ \;\;\;\;\;\;\; ($0\leq k\leq \ell p -1$);
	\item $\sum_{k=0}^{\ell p -1}g_k' + e = 1_A$.
\end{itemize}
We make several more computations. First of all,
\begin{align*}
s^*s &= \frac{1}{ (\ell p)^{m-1}   } \sum_{\{u\in B_{\ell p}:u_i = 0\}} \sum_{\{u'\in B_{\ell p}:u_i' = 0\}} \alpha_\omega^{u}(w^*)\alpha_\omega^{u'}(w) \\
&= \frac{1}{ (\ell p)^{m-1}   } \sum_{\{u\in B_{\ell p}:u_i = 0\}} \sum_{\{u'\in B_{\ell p}:u_i' = 0\}} \alpha_\omega^{u}(w^*f_0)\alpha_\omega^{u'}(f_0w) \\
&= \frac{1}{ (\ell p)^{m-1}   } \sum_{\{u\in B_{\ell p}:u_i = 0\}} \sum_{\{u'\in B_{\ell p}:u_i' = 0\}} \alpha_\omega^{u}(w^*) f_u f_{u'} \alpha_\omega^{u'}(w) \\
&= \frac{1}{ (\ell p)^{m-1}   } \sum_{\{u\in B_{\ell p}:u_i = 0\}}  \alpha_\omega^{u}(w^*) f_u f_{u} \alpha_\omega^{u}(w) \\
&= \frac{1}{ (\ell p)^{m-1}   } \sum_{\{u\in B_{\ell p}:u_i = 0\}}  \alpha_\omega^{u}(w^*w)  \\
&= e.
\end{align*}
Second,
\begin{align*}
g_0's &= \frac{1}{ \sqrt{ (\ell p)^{m-1} }  }  \sum_{ \{v\in B_{\ell p}:v_i = 0\} }\sum_{\{ u\in B_{\ell p}: u_i = 0\}} f_v(\alpha_\omega)^u(w) \\
&= \frac{1}{ \sqrt{ (\ell p)^{m-1} }  }  \sum_{ \{v\in B_{\ell p}:v_i = 0\} }\sum_{\{ u\in B_{\ell p}: u_i = 0\}} f_vf_u(\alpha_\omega)^u(w) \\
&= \frac{1}{ \sqrt{ (\ell p)^{m-1} }  } \sum_{\{ u\in B_{\ell p}: u_i = 0\}} f_uf_u(\alpha_\omega)^u(w) \\
&= \frac{1}{ \sqrt{ (\ell p)^{m-1} }  } \sum_{\{ u\in B_{\ell p}: u_i = 0\}} (\alpha_\omega)^u(w) \\
&= s.
\end{align*}
Finally, when $j\neq i$ we have,
\begin{align*}
	\| (\alpha_\omega)^j(s) - s\| &=  \frac{1}{ \sqrt{ (\ell p)^{m-1}  }  }   \left\|   \sum_{\{u\in B_{\ell p}:u_i=0\}} (\alpha_\omega)^j(\alpha_\omega)^u(w) - \sum_{\{u\in B_{\ell p}:u_i = 0\}} (\alpha_\omega)^u(w) \right\| \\
&= \frac{1}{ \sqrt{ (\ell p)^{m-1}  }  }   \left\|   \sum_{ \{ u\in B_{\ell  p}: u_i = 0, u_j = \ell p    \} } (\alpha_\omega)^u(w) - \sum_{\{ u\in B_{\ell p}: u_i=0, u_j = 0  \}} (\alpha_\omega)^u(w) \right\| \\
&\leq \frac{1}{ \sqrt{ (\ell p)^{m-1}  }  } \left(  \left\|   \sum_{ \{ u\in B_{\ell  p}: u_i = 0, u_j = \ell p    \} } (\alpha_\omega)^u(w) \right\| + \left\| \sum_{\{ u\in B_{\ell p}: u_i=0, u_j = 0  \}} (\alpha_\omega)^u(w) \right\|\right).
\end{align*}
Since the terms in the norms are orthogonal, the $C^*$-identity implies that
\begin{align*}
	 \left\|   \sum_{ \{ u\in B_{\ell  p}: u_i = 0, u_j = \ell p    \} } (\alpha_\omega)^u(w) \right\| &\leq \left\|  \sum_{ \{ u\in B_{\ell p}: u_i=0, u_j = \ell p  \} }  (\alpha_\omega)^u(w^*w) \right\|^{1/2} \leq \sqrt{ (\ell p)^{m-2} } \cdot \|w^*w\|^{1/2} \\
	 &\leq \sqrt{ (\ell p)^{m-2} }
\end{align*}
and a similar bound holds for the second norm. Therefore,
\begin{align*}
\| (\alpha_\omega)^j(s) -s \| \leq 2\cdot \sqrt{ \frac{(\ell p)^{m-2}}{(\ell p)^{m-1}}   }  = \frac{2}{ \sqrt{\ell p} } < \e.
\end{align*}
Since we work in the ultrapower, we may actually assume that
$$
(\alpha_\omega)^j(s) = s\;\;\;\;\;\;\; (j\neq i).
$$
Now define, for $0\leq k\leq m-1$,
$$
g_k := \sum_{r=0}^{\ell -1 } g'_{k+rp}
$$
and 
$$
v := \frac{1}{\sqrt{\ell}} \sum_{r=0}^{\ell -1} (\alpha_\omega)^{(rp)i}(s).
$$
One checks that $g_0, g_1,...,g_{m-1}, v$ satisfy the desired properties.
\end{proof}

\begin{lem} Let $A$ be a unital separable $C^*$-algebra with nonempty trace space $T(A)$, and $\alpha:\Z^m\to \Aut(A)$ be a group action. Suppose $\alpha$ has the approximate Rokhlin property and $A$ has property (SI). Then for each $i\in \{1,...,m\}$ and any $p\in \N$ there exist positive contractions
$$
a_0,...,a_{p-1}, b_0,...,b_{p-1},c_0,...,c_p
$$
in F(A) such that
\begin{enumerate}
\item[(i)] $(\alpha_\omega)^i (a_k) = a_{(k+1 \mod p)}$ and  $(\alpha_\omega)^i(b_k) = b_{(k+1 \mod p)} \;\;\;\;\;\; (0\leq k\leq p-1)$. 
\item[(ii)] $(\alpha_\omega)^i(c_k) = c_{(k+1 \mod p+1)} \;\;\;\;\;\; (0\leq k\leq p)$. 
\item[(iii)] $(\alpha_\omega)^{j}(a_k) = a_k$ and $(\alpha_\omega)^{j}(b_k) = b_k\;\;\;\;\;\; (1\leq j\neq i\leq m,\; 0\leq k\leq p-1)$
\item[(iv)] $(\alpha_\omega)^{j}(c_k) = c_k\;\;\;\;\;\; (1\leq j\neq i\leq m,\; 0\leq k\leq p)$
\item[(v)] $a_k a_n = b_kb_n = 0\;\;\;\;\;\; (0\leq k\neq n\leq p-1)$.
\item[(vi)] $c_kc_n = 0\;\;\;\;\;\; (0\leq k\neq n\leq p)$.
\item[(vii)] $b_k c_n = 0\;\;\;\;\;\; (0\leq k\leq p-1,\; 0\leq n\leq p)$.
\item[(viii)] $\sum_{k}a_k + \sum_{k} b_k + \sum_{k} c_k = 1_A$.
\end{enumerate}
\end{lem}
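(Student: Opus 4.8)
The plan is to build the three families from genuine Rokhlin towers of the two \emph{different} periods $p$ and $p+1$, obtained by invoking the preceding lemma twice, and then to fuse them into a single partition of unity with the help of property (SI) and the $\alpha_\omega$-$\sigma$-ideal structure of the trace-kernel ideal $J_A$ (Lemma \ref{lem:alpha-sigma-ideal}). As in the preceding lemma I work throughout in $F(A)\subseteq A_\omega$ and use that it suffices to verify (i)--(viii) up to an arbitrarily small $\e>0$ in $\|\cdot\|_{2,\omega}$, Kirchberg's $\e$-test then promoting the approximate relations to the exact ones asserted. Applying the preceding lemma with height $p$ produces positive contractions $g_0,\dots,g_{p-1}$ and an element $v$ with $(\alpha_\omega)^i(g_k)=g_{k+1}\bmod p$, $g_0v=v$, $(\alpha_\omega)^{pi}(v)=v$, $\sum_k g_k+v^*v=1_A$, all fixed by $(\alpha_\omega)^j$ for $j\neq i$; applying it with height $p+1$ gives in the same way a period-$(p+1)$ tower $g_0',\dots,g_p'$ and an element $v'$ satisfying the analogous relations. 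In each case the residual $v^*v$ (resp.\ $v'^*v'$) equals $1_A$ minus a sum that $(\alpha_\omega)^i$ permutes cyclically, hence is fixed by $(\alpha_\omega)^i$, and inspection of its construction shows it lies in $J_A$, so it is trace-negligible.

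Observe that a period-$(p+1)$ tower for $(\alpha_\omega)^i$ cannot be manufactured out of the height-$p$ data alone: the residual $v^*v$ is $(\alpha_\omega)^i$-\emph{fixed}, and the $(\alpha_\omega)^i$-invariant region it spans together with the $g_k$'s carries only the periods $1$ and $p$. This is exactly why the independent height-$(p+1)$ application is needed. I would therefore take the period-$(p+1)$ family to be (a cleaned-up version of) $c_k:=g_k'$, so that (ii),(iv),(vi) hold and $1_A-\sum_k c_k=v'^*v'\in J_A$ is $(\alpha_\omega)^i$-fixed. The two period-$p$ families $a$ and $b$, together with the passage to the exact sum (viii), are then produced by invoking property (SI): SI applied to the residuals and to the bases of the towers furnishes comparison elements identifying $v^*v$ and $v'^*v'$ with subobjects of the tower bases, which is what allows the trace-negligible gaps to be filled. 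Splitting this filling into \emph{two} period-$p$ towers lets the second, $b$, absorb the (still trace-negligible) defect left by the first, giving an exact identity; arranging the $b$-levels inside the complement of the supports of the $c_k$ secures $b_kc_n=0$, which is why only $b\perp c$, and not $a\perp c$ or $a\perp b$, is asserted.

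Finally, all relations must hold \emph{exactly} and be $(\alpha_\omega)^j$-invariant for $j\neq i$. Exactness of the orthogonality relations and of the sum comes from the $\sigma$-ideal machinery: since $J_A$ is an $\alpha_\omega$-$\sigma$-ideal (Lemma \ref{lem:alpha-sigma-ideal}), applied to the separable, $\alpha_\omega$-invariant subalgebra generated by $A$ together with all the towers and comparison elements, there is an $\alpha_\omega$-fixed positive contraction $u\in J_A$ acting as a unit on the relevant trace-negligible pieces; compressing by $1-u$, exactly as in the derivation of the approximate Rokhlin property, turns the modulo-$J_A$ relations into exact ones without disturbing invariance. The transverse invariance itself survives because every ingredient $g_k,v,g_k',v'$ is already $(\alpha_\omega)^j$-fixed, and because each element manufactured by (SI) is averaged over the $m-1$ directions $\xi_j$, $j\neq i$, in the manner of the $\frac{1}{\sqrt{(\ell p)^{m-1}}}\sum_u(\alpha_\omega)^u(\,\cdot\,)$ averaging of the preceding lemma. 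The main obstacle is precisely this reconciliation step: the residuals are $(\alpha_\omega)^i$-fixed and so cannot themselves host nontrivial towers, and it is the combination of property (SI), to fill the trace-negligible gaps, with the $\alpha_\omega$-$\sigma$-ideal property of $J_A$, to render everything exact and invariant, that makes an exact partition into towers of the two periods possible. Checking (i)--(viii) for the resulting $a,b,c$ is then routine bookkeeping.
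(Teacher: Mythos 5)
There is a genuine gap, and it sits exactly where you locate ``the main obstacle.'' Your architecture --- two independent applications of the preceding lemma, with heights $p$ and $p+1$, fused afterwards --- cannot be closed. Each application of that lemma produces its \emph{own} partition of unity: $\sum_k g_k + v^*v = 1_A$ and $\sum_k g_k' + v'^*v' = 1_A$. Once you set $c_k := g_k'$ (even after a cut-down), condition (viii) together with $b_kc_n=0$ forces $\sum_k a_k + \sum_k b_k = v'^*v'$ and $b_k = b_k\,v'^*v'$: you are asking for two exact height-$p$ towers living inside a trace-zero, $(\alpha_\omega)^i$-\emph{fixed} corner, which is precisely the configuration you yourself observe cannot host nontrivial towers. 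Property (SI) does not help here: its one use is to compare a trace-zero element into a ``large'' one, and that use has already been spent inside the preceding lemma to produce $v$; it does not decompose a trace-zero $(\alpha_\omega)^i$-invariant element into cyclically permuted orthogonal pieces. Moreover the two partitions of unity come from unrelated applications of the lemma and carry no orthogonality relations with each other, so no $\sigma$-ideal compression will produce $b\perp c$ while keeping $\sum a+\sum b+\sum c=1_A$. The appeal to Lemma \ref{lem:alpha-sigma-ideal} is in any case misplaced: the preceding lemma already delivers exact relations in $F(A)$, and the $\sigma$-ideal trick was consumed in passing from the $W^*$-Rokhlin property to the approximate Rokhlin property.

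The paper's proof (following \cite[Theorem 6.4]{Lia16}, i.e.\ the Matui--Sato restacking argument) applies the preceding lemma \emph{once}, with a single large height $\ell p$. The resulting tower $g_0,\dots,g_{\ell p-1}$ together with the element $v$ (satisfying $v^*v=1_A-\sum_k g_k$, $g_0v=v$, $(\alpha_\omega)^{(\ell p)i}(v)=v$) is used to build matrix units and a unital map $\psi:M_{\ell p+1}(\C)\to F(A)$; the three towers $a,b,c$, with the specific orthogonality pattern $b\perp c$, are read off from this structure, the identity $\ell p+1=(\ell-1)p+(p+1)$ accounting for the two heights $p$ and $p+1$. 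The only new point in the $\Z^m$ setting is that every ingredient is already fixed by $(\alpha_\omega)^j$ for $j\neq i$, hence so is the entire image of $\psi$, which yields (iii) and (iv) for free; no averaging over the transverse directions is needed at this stage, and the $\frac{1}{\sqrt{(\ell p)^{m-1}}}$-averaging you propose would in any case require the mutual orthogonality of the translates that made the norm estimate work in the preceding lemma, which your (SI)-produced elements do not come with.
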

\begin{proof} 
Copy the proof of \cite[Theorem 6.4]{Lia16} verbatim, replacing \cite[Proposition 6.2]{Lia16} by the previous lemma. Note that since all ``matrix units'' $x_i$'s are fixed by $(\alpha_\omega)^j$, the entire image of $\psi:M_{\ell p + 1}(\C)\to F(A)$ is fixed by $(\alpha_\omega)^j$ (here $j\neq i$). Therefore (iii) and (iv) are satisfied.
\end{proof}

\begin{thm} \label{thm:main} Let $A$ be a unital simple separable nuclear $C^*$-algebra with nonempty trace space $T(A)$, and let $\alpha:\Z^m\to \Aut(A)$ be a group action. Suppose
\begin{enumerate}
	\item[(1)] $A$ has property (SI),
	\item[(2)] $\alpha$ is strongly outer,
	\item[(3)] $T(A)$ is a Bauer simplex with finite dimensional extreme boundary,
	\item[(4)] $\tau\circ \alpha = \tau$ for every $\tau\in T(A)$.
\end{enumerate}
Then $\rdim(\alpha) \leq 4^m-1$.
\end{thm}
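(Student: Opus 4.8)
The plan is to route everything through Proposition \ref{prop:Zm-Rokhlin} with $d=3$, so that the bound $\rdim(\alpha)\leq (d+1)^m-1 = 4^m-1$ drops out once we produce, in each coordinate direction, four colored Rokhlin towers for $\alpha^i$ that are fixed by the remaining generators. The whole argument is an assembly of the machinery of Sections 3--5, with the one genuinely new manipulation being the recombination of towers at the end.

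First I would realize the hypotheses as a group action on a continuous $W^*$-bundle. Since $T(A)$ is a Bauer simplex with $\dim\partial_e T(A)<\infty$, Ozawa's theorem \cite{Oza13} makes $(\overline{A}^u,\partial_e T(A),E)$ a separable continuous $W^*$-bundle with finite-dimensional base, and the invariance $\tau\circ\alpha=\tau$ guarantees the induced homeomorphism action on $\partial_e T(A)$ is trivial, so we obtain $(\tilde{\alpha},\bar{\alpha}):\Z^m\to\Aut(\overline{A}^u,\partial_e T(A),E)$ with $\bar{\alpha}=\id$. For each extreme trace $\lambda\in\partial_e T(A)$ the fiber $\pi_\lambda(\overline{A}^u)\cong\pi_\lambda(A)''$ is a finite factor; it is injective because $A$ is nuclear, hence equal to the hyperfinite II$_1$ factor $\hr$ by Connes' theorem. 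It cannot be finite-dimensional, since a matrix algebra admits no outer automorphism whereas strong outerness forces each weak extension to be outer. Finally, because every trace is $\alpha$-invariant, strong outerness of $\alpha$ says precisely that each $\beta_\lambda^g$ with $g\neq 0$ is outer, i.e.\ the fiber action is pointwise outer. Thus all four hypotheses of Theorem \ref{thm:W-Rokhlin} hold, and $(\tilde{\alpha},\bar{\alpha})$ has the $W^*$-Rokhlin property.

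Next, Lemma \ref{lem:W-Rok-to-Approx-Rok} upgrades this to the approximate Rokhlin property for $\alpha$, and then the two lemmas of Section 5---which combine the approximate Rokhlin property with property (SI)---yield, for each $i\in\{1,\dots,m\}$ and each $p$, positive contractions $a_0,\dots,a_{p-1}$, $b_0,\dots,b_{p-1}$, $c_0,\dots,c_p$ in $F(A)$ satisfying (i)--(viii): they are cyclically permuted by $\alpha^i$ (the first two with period $p$, the third with period $p+1$), mutually orthogonal in the stated pattern, sum to $1_A$, and are fixed by every $\alpha^j$ with $j\neq i$. It remains to convert these into the four same-height colored towers demanded by Proposition \ref{prop:Zm-Rokhlin}. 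Here I would take the $a$-tower as color $0$ and the $b$-tower as color $1$ (both already of height $p$), and repartition the height-$(p+1)$ tower $c_0,\dots,c_p$ into two height-$p$ towers, colors $2$ and $3$, exactly as in the single-automorphism argument of \cite{Lia16}. The repartition is chosen so that the combined floors still recover $\sum_k c_k$; the extra color is what absorbs the wrap-around defect coming from the incompatible period $p+1$, and is the source of the constant $4 = 1+1+2$. Since the four families lie in the central sequence algebra $F(A)$ and satisfy the tower relations exactly there, passing to representing sequences gives, for any prescribed $\e$ and finite $F\subseteq A$, families in $A$ meeting conditions (1)--(5) of Proposition \ref{prop:Zm-Rokhlin}: almost-centrality from $F(A)$, the exact fixing by $\alpha^j$ becoming approximate, orthogonality within each color from the orthogonality of the $a$, $b$, $c$ families, and the global sum from (viii). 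Proposition \ref{prop:Zm-Rokhlin} with $d=3$ then gives $\rdim(\alpha)\leq 4^m-1$.

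The genuine content has been front-loaded into the earlier lemmas, so the main obstacle in this final argument is careful bookkeeping: checking that the bundle fibers are indeed $\hr$ and that, under trace-invariance, strong outerness is literally pointwise outerness of the fiber action, and then carrying out the height-unifying recombination of the $a,b,c$ towers so that equivariance under \emph{all} generators, within-color orthogonality, and the global sum are simultaneously preserved up to $\e$. This last recombination is precisely where the value $4^m-1$ (rather than something smaller) is forced.
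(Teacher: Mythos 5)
Your proposal follows the paper's proof essentially verbatim: verify the hypotheses of Theorem \ref{thm:W-Rokhlin} for the induced action on $(\overline{A}^u,\partial_e T(A),E)$, pass to the approximate Rokhlin property via Lemma \ref{lem:W-Rok-to-Approx-Rok}, feed this and property (SI) into the Section 5 lemmas to get the $a,b,c$ towers in each coordinate direction, unify the heights, and conclude via Proposition \ref{prop:Zm-Rokhlin} with $d=3$. The only caveat is that the height-unification step is not a literal repartition of the cyclic height-$(p+1)$ tower into two exactly cyclic height-$p$ towers (which is impossible since $\gcd(p,p+1)=1$); it is the approximate conversion of \cite[Proposition 2.8]{HWZ15}, which is precisely what the paper invokes, so your argument is correct as long as that citation is understood to carry this step.
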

\begin{proof} Write $(\hm, K,E)$ for the bundle $(\overline{A}^u, \partial_e T(A), E)$. Under the current assumptions, the induced group action $(\tilde{\alpha}, \bar{\alpha}):\Z^m \to \Aut(\hm, K, E)$ satisfies conditions (1) to (4) in Theorem \ref{thm:W-Rokhlin}. Therefore the action $(\tilde{\alpha}, \bar{\alpha})$ has the $W^*$-Rokhlin property. By Lemma \ref{lem:W-Rok-to-Approx-Rok}, $\alpha$ has the approximate Rokhlin property. The previous lemma, together with \cite[Proposition 2.8]{HWZ15} (which converts towers of different heights to towers of the same height), shows that the assumptions of Proposition \ref{prop:Zm-Rokhlin} are satisfied with $d=3$. By the same proposition we have $\rdim{(\alpha)} \leq 4^m -1$.
\end{proof}

Since in this case Property (SI) is implied by finite nuclear dimension, we obtain the following corollaries.

\begin{cor}
\label{cor:main} Let $A$ be a unital simple separable nuclear $C^*$-algebra with nonempty trace space $T(A)$, and let $\alpha:\Z^m\to \Aut(A)$ be a group action. Suppose
\begin{enumerate}
	\item[(1)] $A$ has finite nuclear dimension,
	\item[(2)] $\alpha$ is strongly outer,
	\item[(3)] $T(A)$ is a Bauer simplex with finite dimensional extreme boundary,
	\item[(4)] $\tau\circ \alpha = \tau$ for every $\tau\in T(A)$.
\end{enumerate}
Then $\rdim(\alpha) \leq 4^m-1$.
\end{cor}

\begin{cor}
Under the same assumptions as Corollary \ref{cor:main}, the crossed product $A\rtimes_\alpha \Z^m$ is a unital simple separable $C^*$-algebra of finite nuclear dimension.
\end{cor}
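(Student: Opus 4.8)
The plan is to dispatch the elementary structural properties first and then concentrate on nuclear dimension. That $A \rtimes_\alpha \Z^m$ is unital and separable is immediate, since $A$ is unital and $\Z^m$ is discrete, and $A$ is separable with $\Z^m$ countable; and because $A$ is nuclear while $\Z^m$ is amenable, the reduced and full crossed products coincide and the result is again nuclear. For simplicity I would first extract \emph{pointwise outerness} from the hypotheses: if some $\alpha^v$ with $v\neq 0$ were inner, say $\alpha^v = \mathrm{Ad}(u)$ with $u \in A$ unitary, then since hypothesis (4) makes every trace $\alpha$-invariant, the weak extension of $\alpha^v$ in each GNS representation would be $\mathrm{Ad}(\pi_\tau(u))$, hence inner, contradicting strong outerness. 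Thus each $\alpha^v$ ($v\neq 0$) is an outer automorphism of the simple algebra $A$, hence properly outer, and Kishimoto's simplicity theorem for outer actions of discrete groups on simple $C^*$-algebras yields simplicity of $A \rtimes_\alpha \Z^m$.

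For finite nuclear dimension the first ingredient is Theorem~\ref{thm:main}, which already supplies $\rdim(\alpha) \leq 4^m - 1 < \infty$. The second ingredient is the permanence estimate of Hirshberg--Winter--Zacharias \cite{HWZ15} and Szab\'o \cite{Sza15}, which bounds $\ndim(A \rtimes_\alpha \Z^m)$ by an explicit function of $\ndim(A)$ and $\rdim(\alpha)$. Feeding the bound on $\rdim(\alpha)$ into this estimate, the proof reduces to checking that $\ndim(A) < \infty$.

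To establish $\ndim(A) < \infty$ I would exploit the remaining hypotheses. Since $A$ is unital separable simple nuclear with property (SI), and $T(A)$ is a Bauer simplex with finite-dimensional extreme boundary, the Matui--Sato and Kirchberg--R\o rdam machinery (\cite{MS12a}, \cite{KR14}) applies: the same finite-dimensionality of $\partial_e T(A)$ that drives Proposition~\ref{prop:fixedpointsubalgebra} supplies unital embeddings of matrix algebras into the central sequence algebra $F(A)$, and together with property (SI) this forces $A$ to be $\mathcal Z$-stable. A unital separable simple nuclear $\mathcal Z$-stable $C^*$-algebra has finite nuclear dimension, so $\ndim(A) < \infty$ and the permanence estimate then closes the argument.

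The genuine difficulty is not the Rokhlin dimension --- that is handed to us by Theorem~\ref{thm:main} --- but the regularity transfer: upgrading the comparison-type hypothesis (SI) all the way to finite nuclear dimension of $A$. This passes through two deep external inputs, namely the implication ``(SI) $+$ finite-dimensional Bauer boundary $\Rightarrow$ $\mathcal Z$-stable'' and the nuclear-dimension theorem for $\mathcal Z$-stable simple nuclear algebras, and I expect checking that the hypotheses meet the precise requirements of these results to be the main obstacle. A cleaner alternative, which avoids invoking the nuclear-dimension bound for the coefficient algebra, is to show directly that a finite-Rokhlin-dimension action transports $\mathcal Z$-stability from $A$ to $A \rtimes_\alpha \Z^m$, thereby establishing $\mathcal Z$-stability of the crossed product and then invoking the same nuclear-dimension theorem for the crossed product itself.
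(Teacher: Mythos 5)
Your argument is correct and follows the same route as the paper: simplicity via pointwise outerness (correctly extracted from strong outerness using the trace-invariance hypothesis) together with Kishimoto's theorem \cite{Kis81}, and finite nuclear dimension via Theorem \ref{thm:main} combined with the permanence estimate of \cite{HWZ15, Sza15} bounding $\ndim(A\rtimes_\alpha\Z^m)$ in terms of $\ndim(A)$ and $\rdim(\alpha)$. The one substantive difference is that you explicitly notice that this estimate is vacuous unless $\ndim(A)<\infty$, which is not among the stated hypotheses, and you close that gap by the chain ``nuclear $+$ (SI) $+$ Bauer simplex with finite-dimensional boundary $\Rightarrow$ $\mathcal{Z}$-stable'' (Matui--Sato \cite{MS12a}, as extended by \cite{KR14} and others to the finite-dimensional compact boundary setting) followed by the nuclear-dimension theorem of \cite{BBSTWW15} for such $\mathcal{Z}$-stable algebras. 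The paper's one-line proof leaves this step entirely implicit, so your version is the more complete one; your self-assessment that matching the exact hypotheses of these two external theorems is the real work is accurate, and your suggested alternative (transporting $\mathcal{Z}$-stability to the crossed product and applying the nuclear-dimension theorem there) is also a legitimate route, though it relies on the same external input.
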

\begin{proof} From Theorem \ref{thm:main} and \cite[Proposition 2.4]{Sza15}, we see that the crossed product $A\rtimes_\alpha \Z^m$ has finite nuclear dimension. Simplicity follows from \cite[Theorem 3.1]{Kis81}.
\end{proof}


\section{Example: Bernoulli Actions}

In this final section we study the Bernoulli actions. We will show that for a large class of $C^*$-algebras, the $\Z^m$-Bernoulli shift has finite Rokhlin dimension. Throughout the section all tensor products are understood as minimal tensor products.

We recall the definition of Bernoulli actions. Let $A$ be a unital $C^*$-algebra and $\Gamma$ be a discrete group. Define $\bigotimes_{\Gamma} A$ to be the inductive limit of the system $\{ \bigotimes_F A : F\subseteq \Gamma \text{ finite subset} \}$, where the connecting maps are the natural inclusions. The (left) translation action of $\Gamma$ on itself gives rise to an action $\sigma:\Gamma\to \Aut(\bigotimes_\Gamma A)$, called the \emph{Bernoulli action} (or \emph{Bernoulli shift}).

The following proposition is a slight variation of \cite[Proposition 4.4]{Sat10}.

\begin{prop} \label{prop:bernoulli-shift}
Let $A$ be a unital simple $C^*$-algebra with a unique trace. Suppose $A$ is not isomorphic to the algebra of complex numbers $\C$. Then the Bernoulli action $\sigma:\Z^m\to \Aut(\bigotimes_{\Z^m} A)$ is strongly outer.
\end{prop}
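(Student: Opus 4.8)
The plan is to fix a nonzero $g\in\Z^m$ and a $\sigma^g$-invariant trace $\phi$ on $B:=\bigotimes_{\Z^m}A$, pass to the tracial von Neumann algebra $M:=\pi_\phi(B)''$ with its faithful normal trace (still denoted $\phi$) and the induced weak extension $\tilde\sigma^g\in\Aut(M)$, and show that $\tilde\sigma^g$ cannot be inner. Write $a_v\in B$ for the copy of $a\in A$ sitting in the $v$-th tensor factor, set $\|x\|_2:=\phi(x^*x)^{1/2}$, and normalize so that $\sigma^g(a_v)=a_{v+g}$. The one structural input I would extract immediately is that, because $A$ has a \emph{unique} trace, $\phi$ restricts to $\tau_A$ on every factor $A_v$; this pins down all the marginals even though $\phi$ itself need not be a product trace. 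I then fix once and for all a self-adjoint $a\in A$ with $\tau_A(a)=0$, $\|a\|\le 1$ and $a\ne 0$ (possible since $A\ne\C$), and put $t:=\tau_A(a^2)$, which is strictly positive because a trace on a simple $C^*$-algebra is faithful.

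Suppose toward a contradiction that $\tilde\sigma^g=\Ad(u)$ for a unitary $u\in M$. First I would run a standard localization: approximate $u$ in $\|\cdot\|_2$ to within $\e$ by an element $u'$ lying in $\pi_\phi(B_F)$ for a finite box $F$. For any site $v$ with $v,v+g\notin F$, both $a_v$ and $a_{v+g}$ commute with $u'$, so the relation $u a_v u^{*}=\tilde\sigma^g(a_v)=a_{v+g}$ yields $\|(a_v-a_{v+g})u'\|_2\le 2\e$; since right multiplication by the unitary $u$ is $\|\cdot\|_2$-isometric, a further $\e$-perturbation gives $\|a_v-a_{v+g}\|_2\le 4\e$. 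As $\e\to 0$ the box $F$ grows but always excludes only finitely many $v$, so this produces $\lim_{v\to\infty}\|a_v-a_{v+g}\|_2=0$, equivalently $\phi(a_v a_{v+g})\to t$.

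The decisive step is to upgrade this asymptotic statement to an exact one using $\sigma^g$-invariance: since $\sigma^g(a_v a_{v+g})=a_{v+g}a_{v+2g}$, the quantity $\phi(a_v a_{v+g})$ is constant along each $\langle g\rangle$-coset, and every coset contains points tending to infinity, so the limit forces $\phi(a_v a_{v+g})=t$ for \emph{all} $v$. Equality in Cauchy--Schwarz (both vectors have $\|\cdot\|_2$-norm $\sqrt t$) then gives $\pi_\phi(a_v)=\pi_\phi(a_{v+g})$; subtracting scalar parts promotes this to $\pi_\phi(b_v)=\pi_\phi(b_{v+g})$ for every $b\in A$. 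Taking $v=0$ and using that the distinct commuting sites $0$ and $g$ satisfy $\pi_\phi(c_0)=\pi_\phi(c_g)$, I conclude that $\pi_\phi(b_0)$ and $\pi_\phi(c_0)$ commute for all $b,c\in A$, i.e. $\pi_\phi(A_0)$ is abelian. But $\phi|_{A_0}=\tau_A$ makes the trace-preserving embedding $A\hookrightarrow M$ extend to a normal embedding of $\pi_{\tau_A}(A)''$ into $M$, and $\pi_{\tau_A}(A)''$ is a factor (unique trace) different from $\C$ (simplicity together with $A\ne\C$), hence non-abelian --- contradicting that its isomorphic image $\pi_\phi(A_0)''$ is abelian.

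I expect the localization estimate and the final factor argument to be routine; the one point demanding care is the passage from ``$\phi(a_v a_{v+g})\to t$'' to ``$=t$ for all $v$,'' where I must exploit that $\phi$ is invariant only under the single automorphism $\sigma^g$ and not under the full $\Z^m$, so the correct bookkeeping is invariance along $\langle g\rangle$-cosets rather than full translation invariance. This is exactly what makes the argument run uniformly for every nonzero $g$, and hence delivers strong outerness for the whole action.
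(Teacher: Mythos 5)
Your proof is correct, but its second half runs along a genuinely different track from the paper's. Both arguments open identically: approximate the implementing unitary in the $2$-norm by an element supported on a finite box $F$, and exploit that tensor factors away from $F$ commute with the approximant. The paper then keeps the test element fixed and moves the \emph{approximant}: since $\tilde{\sigma}^{nv}(W_0)=W_0$ (because $\tilde{\sigma}^{nv}=\Ad(W_0^n)$), the local approximant $w_0$ may be replaced by $\sigma^{nv}(w_0)$, which for large $n$ commutes with any fixed local element; hence $W_0$ is central, hence scalar because $\pi_\tau(\bigotimes_{\Z^m}A)''$ is a factor, and $\tilde{\sigma}^v=\id$ gives the contradiction. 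You instead keep the unitary fixed and move the \emph{test elements}, deducing $\pi_\phi(a_v)=\pi_\phi(a_{v+g})$ first asymptotically and then, via $\sigma^g$-invariance of $\phi$ along $\langle g\rangle$-cosets, exactly; commutation of distinct sites then forces $\pi_\phi(A_0)$ to be abelian. Your route buys something real: it works for an arbitrary $\sigma^g$-invariant trace $\phi$ and uses uniqueness of the trace only on $A$ itself (to pin the marginals down to $\tau_A$), whereas the paper's centrality-plus-factoriality step rests on $\bigotimes_{\Z^m}A$ again having a unique trace. Two small remarks: the assertion that $\pi_\phi(b_v)=\pi_\phi(b_{v+g})$ for \emph{every} $b\in A$ requires rerunning the localization and Cauchy--Schwarz argument for each traceless self-adjoint $b$, which is fine since nothing depended on the particular choice of $a$; and your endgame can be shortened --- $\pi_\phi$ is injective on $A_0$ by simplicity, so $\pi_\phi(A_0)\cong A$, and a unital simple abelian $C^*$-algebra is $\C$, so no von Neumann factor argument is needed.
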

\begin{proof}
For convenience we write $C := \bigotimes_{n\in \Z} A$. Let $\tau$ be the unique trace on $C$ and let $\pi_\tau$ be the GNS representation of $C$ associated to $\tau$. Let $\tilde{\sigma}\in \Aut(\pi_\tau(C)'')$ be the weak extension of the action $\sigma$. We show that for every $v\in \Z^m\setminus\{0\}$, the automorphism $\tilde{\sigma}^v$ is not inner in $\pi_\tau(C)''$.

Assume on the contrary that $\tilde{\sigma}^v = \Ad W_0$ for some unitary $W_0$ in $\pi_\tau(C)''$. Note that $\tilde{\sigma}^{nv}(W_0) = W_0$ for all $n\in \N$. Fix $\e > 0$ and let $x$ be a contraction in some finite tensor product $\bigotimes_F A \subseteq C$. There exist a finite subset $F'\subseteq  \Z^m$ and an element $w_0\in \bigotimes_{F'} A\subseteq C$ such that
$$
\| W_0 - \pi_\tau(w_0) \|_{2,\tau} < \e. 
$$
Then 
$$
\| W_0 - \pi_\tau( \sigma^{nv}(w_0) ) \|_{2,\tau} < \e
$$
for any $n\in \N$. If $n$ is sufficiently large, then $\sigma^{nv}(w_0)$ commutes with $x$. Therefore $W_0$ commutes with $\pi_\tau(x)$ up to $2\e$ in the 2-norm. Since $\e$ is arbitrary and $\tau$ is faithful (simplicity passes to inductive limits), we see that $W_0$ commutes with $\pi_\tau(x)$ exactly, and hence belongs to the center of $\pi_\tau(C)''$. Since $C$ has a unique trace, $\pi_\tau(C)''$ is a factor (the center is trivial). This shows that $\Ad W_0$ is the identity map, a contradiction.
\end{proof}

\begin{cor} \label{cor:bernoulli-shift}
Let $A$ be either the matrix algebra $M_n(\C)$ ($n\geq 2$) or the Jiang-Su algebra $\mathcal{Z}$ (see \cite{JS99}), and let $\sigma:\Z^m\to \Aut(\bigotimes_{\Z^m} A)$ be the Bernoulli action. Then $\sigma$ has finite Rokhlin dimension.
\end{cor}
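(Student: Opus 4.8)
The plan is to apply Theorem \ref{thm:main} directly to the algebra $B := \bigotimes_{\Z^m} A$ equipped with the Bernoulli action $\sigma$, after checking its four hypotheses; the numerical bound $\rdim(\sigma) \leq 4^m - 1$ then comes for free. I would begin by identifying $B$ explicitly in each case. When $A = M_n(\C)$, the inductive-limit construction gives that $B \cong M_{n^\infty}$ is the UHF algebra of type $n^\infty$; when $A = \mathcal{Z}$, strong self-absorption of the Jiang--Su algebra yields $B \cong \mathcal{Z}$. In either case $B$ is unital, simple, separable, nuclear, and carries a \emph{unique} trace $\tau$, so that $T(B) = \{\tau\}$ is a single point.

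From this description, three of the four hypotheses are essentially immediate. Since $T(B)$ is a one-point space, its extreme boundary $\partial_e T(B)$ is a compact space of covering dimension $0$, so $T(B)$ is trivially a Bauer simplex with finite-dimensional extreme boundary, giving hypothesis (3). Because $\tau$ is the unique trace and $\tau \circ \sigma^v$ is again a trace for every $v \in \Z^m$, one has $\tau \circ \sigma^v = \tau$, which is hypothesis (4). Strong outerness of $\sigma$, hypothesis (2), is exactly the content of Proposition \ref{prop:bernoulli-shift}: each $A$ is unital, simple, has a unique trace, and is not isomorphic to $\C$ (as $n \geq 2$, respectively as $\mathcal{Z}$ is infinite-dimensional).

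The remaining point is hypothesis (1), namely that $B$ has property (SI), and this is the one step that draws on external structure theory rather than on the immediate description of $B$. The key observation is that in both cases $B$ is $\mathcal{Z}$-stable of finite nuclear dimension: $M_{n^\infty}$ is an AF algebra (nuclear dimension $0$) and strongly self-absorbing, while $B \cong \mathcal{Z}$ in the second case. Algebras of finite nuclear dimension (equivalently, in this unital simple nuclear setting, $\mathcal{Z}$-stable algebras) possess property (SI), as recorded in the introduction (cf. \cite{MS12a, Win12, Ror04}). With all four hypotheses verified, Theorem \ref{thm:main} yields $\rdim(\sigma) \leq 4^m - 1 < \infty$, which is the assertion.

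I expect the only genuinely nontrivial step to be the verification of property (SI); everything else reduces to the unique-trace structure of the Bernoulli-shift algebra together with the strong outerness already established in Proposition \ref{prop:bernoulli-shift}. It is worth noting that for these particular coefficient algebras the infinite tensor product $B$ itself satisfies the hypotheses of Theorem \ref{thm:main}, so the subshift comparison of Proposition \ref{prop:subalgebra-trick} is not needed here; that tool is reserved for enlarging the class of admissible coefficient algebras beyond the present two examples.
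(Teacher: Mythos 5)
Your proposal is correct and follows essentially the same route as the paper: identify $\bigotimes_{\Z^m} M_n(\C)$ with the UHF algebra $M_{n^\infty}$ and $\bigotimes_{\Z^m}\mathcal{Z}$ with $\mathcal{Z}$, get strong outerness from Proposition \ref{prop:bernoulli-shift}, obtain property (SI) from Jiang--Su stability via \cite{MS12a}, and note that the remaining hypotheses of Theorem \ref{thm:main} are automatic for a unique-trace algebra. The parenthetical appeal to the equivalence of finite nuclear dimension and $\mathcal{Z}$-stability is unnecessary (and stronger than what is needed), but it is not load-bearing, since your actual argument only uses $\mathcal{Z}$-stability $\Rightarrow$ (SI), exactly as in the paper.
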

\begin{proof}
In either case, $A$ is a unital simple separable nuclear $C^*$-algebra with a unique trace. Therefore the previous proposition shows that the Bernoulli action $\sigma$ is strongly outer. It is well-known that the infinite tensor product of a nuclear $C^*$-algebra remains nuclear. To apply Theorem \ref{thm:main}, it remains to check that $\bigotimes_{\Z^m} A$ has property (SI). In the case that $A= M_n(\C)$, the infinite tensor product is the UHF algebra of type $n^\infty$, which is known to absorb the Jiang-Su algebra $\mathcal{Z}$ (for example, see \cite{KW04} and \cite{Win10}). In the case $A=\mathcal{Z}$, the infinite tensor product is isomorphic to $\mathcal{Z}$ itself by \cite[Theorem 4]{JS99}, and in particular is Jiang-Su stable as well. Since Jiang-Su stability implies property (SI) \cite[Theorem 1.1]{MS12a}, the proof is complete. 
\end{proof}

The following proposition allows us to expand the class of $C^*$-algebra drastically.

\begin{prop} \label{prop:subalgebra-trick}
	Let $A$ be a unital $C^*$-algebra, $\alpha:\Z^m\to \Aut(A)$ a group action, and $B$ an $\alpha$-invariant subalgebra of $A$ with $1_A\in B$. Suppose the action is asymptotically commutative, in the sense that for any $\e > 0$ and
	any finite subset $F \subseteq A$ there exists $u\in \Z^m$ such that
	$$
	\| [\alpha^u(x), y ] \| < \e
	$$
	for all $x,y\in F$. Then $\rdim(\alpha)\leq \rdim(\alpha\vert_B)$
\end{prop}
\begin{proof}
	Assume $\rdim(\alpha\vert_B) = d < \infty$ otherwise there is nothing to prove. Fix a finite subset $F\subseteq A$, $\e > 0$, and $n\in \N$. By definition of Rokhlin dimension we have positive contractions
	$$
	\{ f_v^{(\ell)} \}_{v\in B_n}^{\ell = 0,1,...,d}
	$$
	in $B$ such that
	\begin{enumerate}
		\item[(1)] $\| \alpha^w(f_v^{(\ell)}) - f_{(v+w \mod n\Z^m)}^{(\ell)} \| < \e  \;\;\;\;\;\; (0\leq \ell \leq d,\; v\in B_n,\; w\in \Z^m)$;
		\item[(2)] $\| f^{(\ell)}_v f^{(\ell)}_{v'} \| < \e\;\;\;\;\;\; (0\leq \ell \leq d, \;v, v'\in B_n,\; v\neq v')$;
		\item[(3)] $\left\|  \sum_{\ell=0}^d\sum_{v\in B_n}f_v^{(\ell)} - 1_A \right\| < \e$.
	\end{enumerate}
	Since we assume the action is asymptotically commutative, there is some $u\in \Z^m$ such that
	$$
	\|  [\alpha^u(f_v^{(\ell)}) , a  ] \| < \e
	$$
	for all $v\in B_n$, $\ell=0,1,...,d$, and $a\in F$. Now if we take $g_v^{(\ell)} := \alpha^u(f_v^{(\ell)})$, then it is readily verified that $\{ g_v^{(\ell)}\}_{v\in B_n}^{\ell = 0,1,...,d}$ form the Rokhlin towers for the original action $\alpha$.
\end{proof}

Let $A$ be a unital $C^*$-algebra, and $\sigma:\Z^m\to \Aut(\otimes_{\Z^m}A)$ be the Bernoulli action. Then $\sigma$ is asymptotically commutative in the sense as in Proposition \ref{prop:subalgebra-trick}. Indeed, by definition of infinite tensor products, every element can be approximated by an element which is nontrivial only on finitely many tensor factors. Therefore the Bernoulli action would eventually shift the elements far enough and make things commutative. This, together with Proposition \ref{prop:subalgebra-trick}, gives us the following corollary:

\begin{cor} \label{lem:bernoulli}
Let $A$ be a unital $C^*$-algebra and $B\subseteq A$ be a $C^*$-subalgebra with $1_A\in B$. Let $\sigma:\Z^m\to \Aut(\bigotimes_{\Z^m} A)$ be the Bernoulli action. Then $\rdim(\sigma) \leq \rdim(\sigma|_{\bigotimes_{\Z^m}B})$.
\end{cor}
\begin{proof}
	It is enough to observe that $\otimes_{\Z^m}B$ is an invariant subalgebra of $\otimes_{\Z^m} A$ under the Bernoulli action, and then apply the previous proposition.
\end{proof}

\begin{example} Let $A$ be a unital $C^*$-algebra. Assume one of the following holds:
\begin{enumerate}
	\item[(1)] $A$ contains a unital copy of the Jiang-Su algebra $\mathcal{Z}$; for example, when $A$ is Jiang-Su stable.
	\item[(2)] $A$ contains a unital copy of a matrix algebra $M_n(\C)$ for some $n\geq 2$; for exmaple, the set of all continuous matrix-valued functions $C(X, M_n(\C))$ on some compact Hausdorff space $X$.
\end{enumerate}
Then the previous corollary, together with Corollary \ref{cor:bernoulli-shift}, implies that the Bernoulli action $\sigma:\Z^m\to \Aut(\bigotimes_{\Z^m} A)$ has finite Rokhlin dimension.
\end{example}

\bibliographystyle{plain}
\addcontentsline{toc}{chapter}{Bibliography}
\bibliography{Biblio-Database}

\end{document}